\newcommand{\E}{\mathbb{E}}
\DeclareMathOperator{\Cov}{Cov} 
\DeclareMathOperator{\sgn}{sgn} 
\DeclareMathOperator{\Var}{Var} 
\DeclareMathOperator{\unif}{unif}
\DeclareMathOperator{\bin}{Bin}
\theoremstyle{plain}%
\newtheorem{theorem}{Theorem}[section]
\newtheorem{lemma}[theorem]{Lemma}
\newtheorem{proposition}[theorem]{Proposition}
\newtheorem{claim}[theorem]{Claim} 
\theoremstyle{definition}
\newtheorem{definition}[theorem]{Definition}
\theoremstyle{remark}
\newtheorem{remark}[theorem]{Remark}
\title{Denseness of volatile and nonvolatile sequences of functions}
\author{Malin Palö Forsström\footnote{Department of Mathematics, Chalmers University of Technology and Gothenburg University, Sweden.
E-mail:  palo@chalmers.se.}}
\date{\today}
\begin{document}

\maketitle

\begin{abstract}
In a recent paper by Jonasson and Steif, definitions to describe the volatility of sequences of Boolean functions, \( f_n \colon \{ -1,1 \}^n \to \{ -1,1 \} \) were introduced. We continue their study of how these definitions relate to noise stability and noise sensitivity. Our main results are that the set of volatile sequences of Boolean functions  is a natural way "dense" in the set of all sequences of Boolean functions, and that the set of non-volatile Boolean sequences is not "dense" in the set of noise stable sequences of Boolean functions.  
\end{abstract}

\section{Introduction}

This paper will be concerned with the  volatility of   sequences of Boolean functions as defined in~\cite{js2015}, and in particular with the relation between the set of volatile sequences of Boolean functions and the sets of noise stable and noise sensitive sequences of Boolean functions respectively (see e.g.~\cite{gs2015}). All of  these definitions can be said to describe aspects of the behaviour of the value of a Boolean function when its input evolves according to a simple  Markov chain \( (X_t) \).

The Markov chain \( (X_t^{(n)})_{t \geq 0} \), \(X_t^{(n)} = (X_t^{(n)}(1), \ldots, X_t^{(n)}(n)) \), with which we will be concerned will have \( \{ -1,1 \}^n \) as its state space. We define the Markov chain by letting each coordinate update independently according to an exponential clock with rate one, setting the value at an updating coordinate to \( 1 \) with probability \( p_n \) and to \( -1 \) with probability \( 1-p_n \). Clearly, the stationary measure \( \pi_{p_n} \) for this process will be \( \{ 1-p_n , p_n \}^n \), and whenever nothing else is  written explicitly we will pick \( X_0^{(n)} \) according to this measure. To stress the dependence of \( (p_n) \) we sometimes add \( p_n \) as a subscript to \( P \), and write \( P_{p_n} \), \( \Cov_{p_n} \) etc.. 

If we compare the value of the process \( (X_t^{(n)}) \) at a coordinate \( i \) for \( t = 0 \) and \( t = \varepsilon \), we get
\begin{equation*}
\begin{cases}
P_{p_n}\left[X^{(n)}_\varepsilon(i) = 1 \mid X_0^{(n)}(i) = 1\right] = e^{-\varepsilon} + (1 - e^{-\varepsilon})p_n \vspace{1ex}\cr
P_{p_n}\left[X^{(n)}_\varepsilon(i) = -1 \mid X_0^{(n)}(i) = 1\right] =  (1 - e^{-\varepsilon})(1-p_n) \vspace{1ex}\cr
P_{p_n}\left[X^{(n)}_\varepsilon(i) = -1 \mid X_0^{(n)}(i) = -1\right] = e^{-\varepsilon} + (1 - e^{-\varepsilon})(1-p_n) \vspace{1ex}\cr
P_{p_n}\left[X^{(n)}_\varepsilon(i) = 1 \mid X_0^{(n)}(i) = -1\right] = (1 - e^{-\varepsilon})p_n .
\end{cases}
\end{equation*}
Consequently, \( X_\varepsilon^{(n)} \) can be thought of as being obtained by resampling each coordinate according to \( \{ 1-p_n, p_n \} \) with probability \( 1 - e^{-\varepsilon} \).

Whenever the dependency on \( n \) is clear, we will drop \( n \) in the superscript of \( (X_t^{(n)}) \).

The concept of noise sensitivity of sequences of Boolean functions was first defined in~\cite{schramm2000} as a measure of to what extent  knowledge about \( f_n(X_0) \) would help to predict \( f_n(X_\varepsilon) \). Our definition is the same as the definition used is e.g.~\cite{gs2015}, and is equivalent to what is called being \emph{asymptotically noise sensitivity} in~\cite{schramm2000}.
\begin{definition}
A sequence of Boolean functions \( f_n \colon \{ -1,1 \}^n \to \{ -1,1 \} \)  is said to be {\emph{noise sensitive}} with respect to \( (p_n) \)  if for all \( \varepsilon > 0 \) 
\begin{equation*}
\lim_{n \to \infty}  \Cov_{p_n}\left[f_n(X_0), f_n(X_\varepsilon) \right] = 0.
\end{equation*} 
\end{definition}

In the same paper, the authors also introduced the concept of noise stability, which captures a possible opposite behaviour.
\begin{definition}
A sequence of Boolean functions \( f_n \colon \{ -1,1 \}^n \to \{ -1,1 \} \) is said to be \emph{noise stable} with respect to \( (p_n) \)  if 
\begin{equation}\label{equation: definition of noise stability}
\lim_{\varepsilon \to 0} \sup_n P_{p_n}\left[f_n(X_\varepsilon) \not = f_n(X_0)\right] = 0.
\end{equation}
\end{definition}
Note that as \( \{ -1,1 \}^n \) is finite for every \( n  \in \mathbb{N} \) ,~\eqref{equation: definition of noise stability}~is equivalent to 
\[
\lim_{\varepsilon \to 0} \limsup_{n \to \infty} P_{p_n}\left[f_n(X_\varepsilon) \not = f_n(X_0)\right] = 0.
\]
When using these definitions, one generally assumes that the sequence \( (f_n) \) of Boolean  functions is \emph{nondegenerate}, meaning that
\[
-1 < \liminf_{n \to \infty}  \E [f_n(X_0)] \leq \limsup_{n \to \infty} \E [f_n(X_0)] <1.
\]
It is easy to show that if \( (f_n) \) is not nondegenerate, then it is both noise sensitive and noise stable.

In~\cite{js2015}, another measure of the stability of a sequence of Boolean functions was introduced. 
One motivation was that the two definitions above, although giving information about  \( f_n(X_t) \)  at two distinct times \( t = 0 \) and \( t = \varepsilon \), gives no information about  \( f_n(X_t) \) for intermediate times \(t \).
\begin{definition}\label{def: volatility}
A sequence of Boolean functions \( f_n \colon \{ -1,1 \}^n \to \{ -1,1 \} \)  is said to be \emph{volatile} with respect to \( (p_n) \) if for all \( \delta > 0 \), 
\[
\lim_{n \to \infty}  P_{p_n}\left[\tau_{\partial f_n} > \delta\right] = 0,
\]
where   \( \tau_{\partial f_n} \) is the hitting time of the set \( \{x \colon f_n(x) \not = f_n(X_0) \} \).
\end{definition}
It is shown in~\cite{js2015} that the property above is independent of \( \delta \).

When introducing a new definition, it is natural to ask how it relates to  earlier and by now well established definitions. Some results in this direction were given in~\cite{js2015}, such as that all nondegenerate, noise sensitive sequences of Boolean functions are volatile.
Throughout this paper we will say that two sequences  of Boolean of functions, \( f_n,g_n \colon \{ -1,1 \}^n \to \{-1,1 \} \), are \( o(1) \)-close with respect to \( (p_n) \) if 
\[
\lim_{n \to \infty} P_{p_n}\left[f_n(X_0) \not = g_n(X_0^{(n)})\right] = 0.
\]
Using this terminology, our first result complements the   results in~\cite{js2015} by stating that any sequence of Boolean functions is \( o(1)\)-close to a volatile sequence of Boolean functions, and hence, in some sense the set of volatile Boolean functions is dense in the set of all Boolean functions. 
\begin{theorem}\label{theorem: striped example}
If 
\begin{equation}\label{eq: natural condition}
\lim_{n \to \infty} np_n(1-p_n) = \infty
\end{equation}
then any sequence of Boolean functions \( f_n \colon \{ -1,1 \}^n \to \{ -1,1 \} \) is \( o(1) \)-close to a volatile sequence of Boolean functions with respect to \( (p_n) \).
\end{theorem}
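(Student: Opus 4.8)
The plan is to perturb each $f_n$ only on a set of stationary measure tending to $0$, replacing it there by a deterministic ``striped'' pattern that depends on the Hamming weight alone; the point is that the Hamming weight of $X_t$ diffuses fast enough to sweep across two neighbouring stripes of opposite sign within any fixed time, with probability tending to one. Concretely, write $N_n(x) := \#\{\, i : x(i) = 1\,\}$, so that under $\pi_{p_n}$ the weight $N_n(X_0)$ is $\bin(n,p_n)$-distributed with standard deviation $\sigma_n := \sqrt{n p_n(1-p_n)}$, and note that $\sigma_n \to \infty$ is exactly hypothesis~\eqref{eq: natural condition}. Fix integers $\Delta_n$ with $\Delta_n \to \infty$ and $\Delta_n/\sigma_n \to 0$ --- such a sequence exists precisely because $\sigma_n \to \infty$, e.g.\ $\Delta_n = \lceil \sqrt{\sigma_n}\,\rceil$ for $n$ large --- put $A_n := \{\, k \in \{0,1,\dots,n\} : \Delta_n \text{ divides } k\,\}$, and define
\[
 g_n(x) := \begin{cases} (-1)^{\,N_n(x)/\Delta_n}, & N_n(x) \in A_n, \\ f_n(x), & \text{otherwise}, \end{cases}
\]
which is again a Boolean function and differs from $f_n$ only on $\{\, x : N_n(x) \in A_n\,\}$.

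For the $o(1)$-closeness it suffices to show $P_{p_n}[\, N_n(X_0) \in A_n\,] \to 0$. Split $A_n$ into the multiples of $\Delta_n$ within distance $K\sigma_n$ of $n p_n$ and the rest. There are at most $2K\sigma_n/\Delta_n + 1$ of the former, each attained with probability at most $C/\sigma_n$ by the standard estimate $\sup_k P_{p_n}[\bin(n,p_n) = k] \le C/\sigma_n$ (valid for $n$ large since $\sigma_n \to \infty$), so they contribute at most $2KC/\Delta_n + C/\sigma_n \to 0$; the latter contribute at most $P_{p_n}[\,|N_n(X_0) - n p_n| > K\sigma_n\,] \le K^{-2}$ by Chebyshev's inequality. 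Letting $n \to \infty$ and then $K \to \infty$ gives $P_{p_n}[f_n(X_0) \ne g_n(X_0)] \le P_{p_n}[N_n(X_0) \in A_n] \to 0$.

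For volatility of $(g_n)$, fix $\delta > 0$ and run $(X_t)$ from $\pi_{p_n}$. Since $t \mapsto N_n(X_t)$ is a birth--death chain with $\pm 1$ steps, the set $\{\, N_n(X_t) : 0 \le t \le \delta\,\}$ is an interval of integers containing $N_n(X_0)$ and $N_n(X_\delta)$. On the event $E_n := \{\, |N_n(X_\delta) - N_n(X_0)| \ge 2\Delta_n\,\}$ this interval contains two consecutive multiples $j\Delta_n$ and $(j+1)\Delta_n$, both in $A_n$, at which $g_n$ takes the values $(-1)^j$ and $-(-1)^j$; hence $g_n(X_t)$ attains both signs along the trajectory $(X_t)_{0 \le t \le \delta}$ and so differs from $g_n(X_0)$ at some $t \le \delta$, i.e.\ $\tau_{\partial g_n} \le \delta$. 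Thus $P_{p_n}[\tau_{\partial g_n} > \delta] \le P_{p_n}[E_n^c]$, and it remains to prove $P_{p_n}[E_n] \to 1$. Using the resampling description of $X_\delta$, write $N_n(X_\delta) - N_n(X_0) = \tfrac12 \sum_{i=1}^n D_i$ with $D_i := X_\delta(i) - X_0(i)$ i.i.d.\ and bounded by $2$; by stationarity $\E[D_i] = 0$, and a short computation gives $\Var(N_n(X_\delta) - N_n(X_0)) = 2(1-e^{-\delta})\,\sigma_n^2 \to \infty$. The summands being bounded, Lindeberg's condition is trivially satisfied, so $(N_n(X_\delta) - N_n(X_0))/(\sqrt{2(1-e^{-\delta})}\,\sigma_n)$ converges in law to a standard Gaussian; since $2\Delta_n/\sigma_n \to 0$ and the limit is continuous, $P_{p_n}[E_n^c] = P_{p_n}[\,|N_n(X_\delta) - N_n(X_0)| < 2\Delta_n\,] \to 0$. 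Hence $P_{p_n}[\tau_{\partial g_n} > \delta] \to 0$ for every $\delta > 0$, so $(g_n)$ is volatile.

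The only genuinely delicate point --- and the only place the hypothesis enters --- is the tension between the two requirements on the stripe spacing: $\Delta_n \to \infty$ is needed to make the overwritten set asymptotically $\pi_{p_n}$-null, while $\Delta_n = o(\sigma_n)$ is needed so that the weight reliably drifts past two adjacent stripes within bounded time. Both can hold simultaneously if and only if $\sigma_n = \sqrt{n p_n(1-p_n)} \to \infty$, which is precisely condition~\eqref{eq: natural condition}; the remaining ingredients are a routine binomial tail estimate and a one-line central limit theorem. Note also that the argument produces volatility directly for every fixed $\delta$, consistent with the $\delta$-independence recalled after Definition~\ref{def: volatility}.
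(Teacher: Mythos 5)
Your proof is correct and follows essentially the same strategy as the paper's: overwrite $f_n$ on alternating-sign level sets of the Hamming weight with spacing $\Delta_n$ chosen so that $\Delta_n\to\infty$ (making the overwritten set $\pi_{p_n}$-null) while $\Delta_n=o(\sigma_n)$ (so the weight sweeps past two adjacent stripes in any fixed time). The only differences are routine substitutions of sub-lemmas --- you justify the smallness of the overwritten set via the local bound $\sup_k P[\bin(n,p_n)=k]\le C/\sigma_n$ plus Chebyshev where the paper pigeonholes over the stripe offset $a_n$, and you establish the displacement of the weight by a direct Lindeberg CLT where the paper invokes the Ornstein--Uhlenbeck scaling limit --- and both of your replacements are valid (your variance computation $\Var(N_n(X_\delta)-N_n(X_0))=2(1-e^{-\delta})\sigma_n^2$ checks out).
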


Note that an analogous result does not hold if we replace the word \emph{volatile} above with the word \emph{noise sensitive}, as all sequences that are \( o(1) \)-close to a noise stable sequence of Boolean functions will  be noise stable, and hence not noise sensitve.

The condition that \( \lim_{n \to \infty} np_n(1-p_n) = \infty \) might seem odd, but is in fact quite natural. To see this, note that given the definition of \( ( X_t^{(n)} ) \), the expected number of coordinates  whose value has changed at least once between  time \( 0 \) and time \( \varepsilon \) is given by
\begin{equation*}
\begin{split}
& n \cdot \left( p_n \left( 1 - e^{-\varepsilon (1-p_n)} \right) + (1-p_n) \left( 1 - e^{-\varepsilon p_n} \right) \right)
\\&\qquad\leq  n \cdot \left( p_n \cdot 2\varepsilon (1-p_n)e^{-\varepsilon (1-p_n)}+ (1-p_n) \cdot 2 \varepsilon p_n e^{-\varepsilon p_n}  \right)
\\&\qquad=  2\varepsilon \cdot n p_n (1-p_n) \cdot \left( e^{-\varepsilon (1-p_n)}+  e^{-\varepsilon p_n}  \right)
\\&\qquad=  2\varepsilon \cdot n p_n (1-p_n) 
\end{split}
\end{equation*}
If \( np_n(1-p_n) \) is bounded from above, then this expression will tend to \( 0 \) as \( \varepsilon \to 0 \), uniformly in \( n \), hence any sequence of Boolean functions will be noise stable, and no sequence of Boolean functions will be volatile.

Throughout this paper, we will say that a set \( \mathcal F \) of sequences of Boolean functions is \emph{dense} in a set \( \mathcal{G} \) of sequences of Boolean functions if for each \( (g_n) \in \mathcal{G} \) there is \( (f_n) \in \mathcal{F} \) such that \( (f_n) \) and \( (g_n) \) is \( o(1)\)-close. Note that we have not assumed that \( \mathcal{F}  \subseteq \mathcal{G} \). With this terminology, we think of Theorem~\ref{theorem: striped example} as saying that the set of all volatile sequences of Boolean functions is dense in the set of all sequences of Boolean functions.

One consequence of Theorem~\ref{theorem: striped example} is that  for any sequence of Boolean functions, by just disturbing the functions in the sequence a little, we can make the sequence volatile. A natural question to ask is then if the reverse also holds, that is whether we can disturb any volatile function by a little to make it non-volatile, or in other words, if the set of non-volatile sequences of Boolean functions is dense in the set of all Boolean functions. However, as we already know that all nondegenerate, noise sensitive sequences of Boolean functions are volatile, and any nondegenerate sequence of Boolean functions which is \( o(1) \)-close to a noise sensitive sequence of Boolean functions must also be noise sensitive, such a converse cannot exist. On the other hand, we could still  ask if there could be a converse for all sequences of Boolean functions that are not noise sensitive, or even weaker, for Boolean functions that are noise stable. The main purpose of the next theorem is to show that no such converse can exist.

\begin{theorem}\label{theorem: noise stable and volatile}

When \( p_n = 0.5 \) there is a nondegenerate and noise stable sequence of Boolean functions \( f_n\colon \{-1,1\}^n \to \{ -1,1 \} \) that is not \( o(1)\)-close to any non-volatile sequence of Boolean functions. Moreover, \( (f_n) \) can be chosen to be either
\begin{enumerate}
\item symmetric and monotone, or
\item invariant under permutations of the coordinates.
\end{enumerate}

\end{theorem}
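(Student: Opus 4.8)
The plan is, for each of the two cases, to write down an explicit sequence $(f_n)$ and then check three things: that $(f_n)$ is nondegenerate, that it is noise stable, and that it is \emph{robustly} volatile in the sense that every sequence $(g_n)$ which is $o(1)$-close to $(f_n)$ is itself volatile. Of these, nondegeneracy will be a direct computation from the defining formula, noise stability will follow from a bound on $P_{0.5}[f_n(X_\varepsilon)\neq f_n(X_0)]$ that is uniform in $n$, and robust volatility will be reduced to a ``dwelling'' estimate for $(f_n)$ plus a soft stationarity argument. I expect the construction itself — getting a single sequence that is simultaneously noise stable and robustly volatile — to be the heart of the matter.

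I would first isolate the soft step. Suppose we can find $\delta>0$, $\eta>0$ and events $E_n\subseteq\{-1,1\}^n$ with $\pi_{0.5}(E_n)\to 1$ such that, started from any $x\in E_n$, the chain spends at least $\eta$ units of time in $\{y\colon f_n(y)\neq f_n(x)\}$ during $[0,\delta]$ with probability $\to 1$, uniformly over $x\in E_n$. Let $(g_n)$ be $o(1)$-close to $(f_n)$ and let $D_n=\{x\colon f_n(x)\neq g_n(x)\}$, so $\pi_{0.5}(D_n)\to 0$. By stationarity the expected total time the chain spends in $D_n$ during $[0,\delta]$ equals $\delta\,\pi_{0.5}(D_n)\to 0$, so by Markov's inequality it spends less than $\eta/2$ time there with probability $\to 1$. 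Conditioning on the event $\{X_0\in E_n\setminus D_n\}$ (which has $\pi_{0.5}$-probability $\to1$, and on which $g_n(X_0)=f_n(X_0)$), with probability $\to 1$ the chain therefore spends at least $\eta/2$ time in $\{y\colon f_n(y)\neq f_n(X_0)\}\setminus D_n\subseteq\{y\colon g_n(y)\neq g_n(X_0)\}$ during $[0,\delta]$; in particular it enters this set, so $\tau_{\partial g_n}\le\delta$. Hence $P_{0.5}[\tau_{\partial g_n}>\delta]\to 0$, and by the $\delta$-independence of volatility recalled after Definition~\ref{def: volatility}, $(g_n)$ is volatile.

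It remains to construct $(f_n)$ with this dwelling property together with noise stability and nondegeneracy, and this is where the real work lies. The tension is that volatility forces the level set $\{y\colon f_n(y)\neq f_n(x)\}$ to be reached — and revisited — very quickly by the continuous-time dynamics from typical $x$, whereas noise stability forbids $f_n(X_\varepsilon)$ from decorrelating from $f_n(X_0)$ at a fixed small $\varepsilon$; one must arrange that the mechanism responsible for the fast, repeated hitting does not also produce a macroscopic flip probability at time $\varepsilon$. I would build $f_n$ on a partition of the coordinates into a ``stable'' block and a hierarchy of ``active'' blocks: the stable block carries a noise-stable, nondegenerate, and (in case (1)) monotone and odd function such as a block majority, which both pins $\E[f_n(X_0)]$ away from $\pm1$ and governs the value of $f_n$ at fixed small times; the active blocks are tuned at the diffusive scale so that the $\pm2$-jumps of an active block sum repeatedly cross one of many ``switching levels'', spaced at a scale that grows with $n$ but is $o(\sqrt{\smash[b]{(\text{block size})}})$, so that the flip set is hit and dwelt in within every fixed $\delta$ with probability tending to $1$, with no Ornstein--Uhlenbeck-type obstruction. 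The delicate point, and the step I expect to be the main obstacle, is the simultaneous choice of block sizes, of the switching levels, and of the gluing rule so that (i) the active-block oscillations spread over a region of non-vanishing $\pi_{0.5}$-measure, so that they cannot be removed by an $o(1)$-close perturbation — which is what the dwelling estimate needs — while (ii) the probability that $f_n(X_\varepsilon)\neq f_n(X_0)$ caused by those oscillations is $o(1)+\psi(\varepsilon)$ with $\psi(\varepsilon)\to0$ uniformly in $n$. For case (2) the same scheme is carried out with the coordinate partition chosen to be invariant under a transitive group of permutations; for case (1) a nested, weighted version keeps $f_n$ monotone and odd.

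Granting the construction, noise stability follows by decomposing the event $\{f_n(X_\varepsilon)\neq f_n(X_0)\}$: a flip of $f_n$ forces either a flip of the stable-block function — controlled by its own uniform noise-stability bound — or a flip coming from the active blocks, controlled by (ii) above; summing the boundedly many contributions yields $\sup_n P_{0.5}[f_n(X_\varepsilon)\neq f_n(X_0)]\le\psi'(\varepsilon)\to 0$. Nondegeneracy is immediate once the active blocks are shown to perturb $\E[f_n(X_0)]$ by only $o(1)$ away from the contribution of the stable block, which is bounded away from $\pm1$. Finally, the dwelling estimate for $(f_n)$ needed in the soft step is exactly property (i) combined with the diffusive hitting-and-return statement for an active block sum, which is a standard random-walk computation.
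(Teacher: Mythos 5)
Your ``soft step'' -- reducing robust volatility to a dwelling estimate for $(f_n)$ via a stationarity/Markov-inequality argument on the symmetric difference $D_n$ -- is a nice and clean idea, and the logic of that reduction is correct as far as it goes. This is genuinely different from what the paper does: for the monotone/symmetric construction the paper instead shows that $f_n$ is, with high probability, determined by a fixed finite window $[m]$ of coordinates, so that $\tau_{\partial f_m}$ (which depends only on $X_t([m])$) serves as a proxy hitting time for $\tau_{\partial g_n}$; and for the permutation-invariant construction the paper shows that the chain's level set traverses an entire interval $I_i^k$ of the circle before time $\delta$, forcing it to hit both values of $g_n$ because the per-interval density of $g_n$ is pinned in $(2^{-(k+2)},1-2^{-(k+2)})$. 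Neither of these passes through a quantitative lower bound on the \emph{occupation time} of the opposite level set; your argument would, if carried through, be arguably more transparent.

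The genuine gap is that you never actually produce $(f_n)$, and the part you leave unspecified is precisely the hard part. The ``stable block / active blocks / switching levels'' description is an informal picture, not a construction: none of the block sizes, levels, or gluing rules are defined, and none of nondegeneracy, noise stability, or the dwelling estimate is verified for any concrete function. You say this yourself (``the delicate point, and the step I expect to be the main obstacle''), so this is an acknowledged hole rather than an oversight, but it means the proposal does not prove the theorem.

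Moreover, the dwelling hypothesis you rely on is genuinely stronger than volatility and is \emph{not obviously} satisfied by either of the paper's constructions. For the last-constant-block function $f_n(x)=\sgn(x(S_{i(x)}))$, the chain enters the opposite set mainly because some higher-indexed block $S_j$ momentarily becomes constant of the wrong sign; but such a block stays constant only for a time of order $1/\ell_j$, which is tiny for the very blocks that are actually responsible for volatility (condition~\eqref{eq: infinite sum} forces $\ell_j\to\infty$ along the relevant subsequence), so it is far from clear that the total occupation time of the opposite set is bounded below by a fixed $\eta>0$ uniformly over $x\in E_n$ with probability tending to $1$. For the circle/level-set construction the occupation time \emph{is} plausibly macroscopic (by a local-time argument for the level random walk combined with the density bounds in~\eqref{eq: mean on interval}), but that is a nontrivial estimate, not the ``standard random-walk computation'' you defer it to, and it requires uniformity in the adversarial placement of the $-1$ levels within each interval. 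So the reduction is sound, but you would need to (a) exhibit an explicit $(f_n)$ in each of the two symmetry classes, and (b) prove the uniform dwelling bound for it, before the argument closes.
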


In terms of denseness, this theorem says that for \( p_n = 0.5 \), the set of non-volatile sequences of Boolean functions is not dense in the set of noise stable sequences of Boolean functions. Even stronger, it says that  for \( p_n = 0.5 \), the set of non-volatile sequences of Boolean functions is not dense neither in  the set of monotone noise stable sequences of Boolean functions nor in the set of noise stable Boolean functions that are invariant under permutations of the coordinates.

\begin{remark}
Similar constructions as the constructions used in the proof of this theorem do work for more general sequences \(( p_n) \) as well, at least as long as \( 0 < \liminf_{n \to\infty} p_n \leq \limsup_{n \to \infty} p_n < 1 \). A proof of this will however not be included in this paper.
\end{remark}

\begin{remark}

In comparison to Theorem~\ref{theorem: noise stable and volatile}, it is quite easy to see that when \( {p_n = 0.5} \), any sequence of Boolean functions \( (f_n) \) is arbitrarily close to a non-volatile sequence \( (f_n')\) of Boolean functions. Given \( (f_n) \) and \( k \in \mathbb{Z}_+\), define
\begin{equation*}
f_n'(x) = \begin{cases} 1 &\textnormal{if } x(1) = \ldots = x(k) = -1 \cr f_n(x) &\textnormal{else}.\end{cases}
\end{equation*}
Then
\begin{equation*}
P\left[f_n(X_0) \not = f_n'(X_0)\right]\leq P\left[X_0(1) = \ldots = X_0(k) = -1\right] = 2^{-k}.
\end{equation*}
This shows that \( (f_n) \) and \( (f_n') \) can be made arbitrarily close by choosing \( k \) large. However, for any fixed value of \( k \),
\begin{equation*}
	\begin{split}
		P\left[\tau_{\partial f_n'}>\delta\right] &\geq P\left[X_t^{(n)}(1) = \ldots = X_t^{(n)}(k)  = -1 \text{ for all } t \in [0,\delta]  \right]
		\\& \geq 2^{-k}  e^{-\delta k}
	\end{split}
\end{equation*}
and hence \( (f_n' ) \) will not be volatile.

\end{remark}

\begin{remark}
The example in the previous remark can easily be extended to more general sequences \( (p_n) \). If \( p_n \to 0 \) but we still have that \( \lim_{n \to \infty} np_n(1-p_n) = \infty \), then with a bit more careful analysis the proof above works if we let \( k \) depend on \( n \) and set \( k(n) = Cp_n^{-1} \) for some constant \( C>0 \). If instead \( p_n \to 1 \), a similar construction works, but we will also have to replace the definition of \( f_n' \) by setting 
\begin{equation*}
f_n'(x) = \begin{cases} 1 &\textnormal{if } x(1) = \ldots = x({k(n)}) = 1 \cr f_n(x) &\textnormal{else}\end{cases}
\end{equation*}
instead.
\end{remark}

The rest of this paper will be structured as follows. In Section~2 we give a proof of Theorem~\ref{theorem: striped example} and  in Section~3 we give a proof of Theorem~\ref{theorem: noise stable and volatile}.

\section{Proof of Theorem~\ref{theorem: striped example}}

In this section we will give  a proof of Theorem~\ref{theorem: striped example}, that is we will  show that if
\[
\lim_{n \to \infty} np_n(1-p_n) = \infty,
\]
then any sequence of Boolean functions \( f_n \colon \{ -1,1 \}^n \to \{ -1,1 \} \) is \( o(1) \)-close to a  sequence of Boolean functions which is volatile with respect to \( (\pi_{p_n}) \).

The main idea in the proof of this result is to, for each \( n \), construct a function \( g_n \) by changing the value of \( f_n \) on certain level sets. By level sets, we mean sets of the form \( \{ x \in \{ -1,1 \}^n \colon \| x \| = \ell \} \) for some  value of \( \ell \), where \( \| x \| \coloneqq \sum_i \frac{x(i) + 1}{2} \). If these level sets are sparse enough, then \( (f_n) \) and \( (g_n) \) will be \( o(1) \)-close, and if the level sets are at the same time close enough, then the Markov chain \( (X_t^{(n)}) \)  will almost surely hit two adjacent such level sets very quickly, and hence be volatile. If we set the value of \( g_n  \) to \( 1 \) on every second such level set and to \( -1 \) on the other level sets, then this would guarantee that \( (g_n) \) is volatile.

\begin{figure}[htp]
\centering
\begin{tikzpicture}
\draw[thick] (0,0) -- (3,3) -- (0,6) -- (-3,3) -- (0,0);
\draw[dotted] (-1,1) -- (1,1) node[right, xshift=0.5em] {\( \{ x \colon \| x \| = \ell_1 \} \)};
\draw[dashed] (-2,2) -- (2,2) node[right, xshift=0.5em] {\( \{ x \colon \| x \| = \ell_2 \} \)};
\draw[dotted] (-3,3) -- (3,3) node[right, xshift=0.5em] {\( \{ x \colon \| x \| = \ell_3 \} \)};
\draw[dashed] (-2,4) -- (2,4) node[right, xshift=0.5em] {\( \{ x \colon \| x \| = \ell_4 \} \)};
\draw[dotted] (-1,5) -- (1,5) node[right, xshift=0.5em] {\( \{ x \colon \| x \| = \ell_5 \} \)};
\end{tikzpicture}
\caption{The figure above depicts the Hamming cube \( \{ -1,1 \}^n \) with some level sets. The dashed lines represents level sets where the function \( g_n \), as defined in~\eqref{eq: def of g_n}, is equal to \( 1 \), and the dotted lines represents level sets where the value of \( g_n \) is equal to \( -1 \). }
\end{figure}

In the proof of this theorem, as well as in the proof of Theorem~\ref{theorem: noise stable and volatile}, we will use well known results on the distribution of Ornstein-Uhlenbeck processes. An Ornstein-Uhlenbeck process \((Z_t) \) with infinitesimal mean \( \mu(z) = -z\) and infinitesimal variance \( \sigma^2(z) = 1\) is defined as the solution to the stochastic differential equation 
\[
dZ_t = -Z_t \, dt + \sqrt{2} dW_t,
\]
where \( W_t \) is a Wiener process.
This stochastic process is useful to us at it arises as the limit of \( \| X_t^{(n)} \| \) as \( n \) tends to infinity, after a suitable normalisation given in the first part of the lemma below. The results in this lemma are well know, and can be found e.g. in~\cite{kt1981}, pp.170--173.

\begin{lemma}\label{lemma: OU process}
Suppose that \( \lim_{n \to \infty} np_n(1-p_n) = 0 \) and let \( (Z_t) \) be an Ornstein-Uhlenbeck process with infinitesimal mean \( \mu(z) = -z \) and infinitesimal variance \( \sigma^2(z) = 1\). Then
\begin{equation*}
\left( \frac{\| X_t^{(n)} \| - np_n}{\sqrt{2np_n(1-p_n)}}  \right)_{ t \geq 0 }
\end{equation*}
converges to \( (Z_t)_{t \geq 0 } \) in distribution. Moreover, for such an Ornstein-Uhlenbeck process \( (Z_t) \), we have that
\begin{equation*}\label{eq: OU mean}
\E \left[ Z(t+s) \mid Z(t) = z \right] = ze^{-s},\quad s,t \geq 0
\end{equation*}
and
\begin{equation*}\label{eq: OU variance}
\Var [Z(t+s) \mid Z(t) = z]= \frac{1-e^{-2s}}{2},\quad s,t \geq 0.
\end{equation*}
\end{lemma}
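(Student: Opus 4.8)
The plan is to handle the two halves of the lemma separately. The final two displays are classical identities for the stationary Ornstein--Uhlenbeck process with infinitesimal mean \( -z \) and infinitesimal variance \( 1 \); I would take them from~\cite{kt1981}, where this process is treated in detail (alternatively they follow by writing the linear SDE in integrated form and using that an Itô integral is centered with second moment given by the Itô isometry). The content is therefore the convergence statement, and that is where I would put the work. Put \( N^{(n)}_t \coloneqq \|X^{(n)}_t\| \), the number of coordinates equal to \( +1 \) at time \( t \), and \( Y^{(n)}_t \coloneqq \bigl( N^{(n)}_t - np_n \bigr) \big/ \sqrt{2np_n(1-p_n)} \). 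Since the coordinates update independently, \( (N^{(n)}_t)_{t\ge 0} \) is a birth--death chain on \( \{0,1,\dots,n\} \) with birth rate \( (n-k)p_n \) and death rate \( k(1-p_n) \) from state \( k \), and its stationary law is \( \bin(n,p_n) \), consistently with \( \pi_{p_n} \).

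I would prove convergence of \( (Y^{(n)}_t) \) by a diffusion-approximation (generator convergence) argument. For a test function \( g \) in a core of the limiting generator, say \( g \in C_c^\infty(\R) \), put \( \phi_n(k) \coloneqq g\bigl((k-np_n)/\sqrt{2np_n(1-p_n)}\bigr) \) and Taylor expand \( \phi_n(k\pm 1) - \phi_n(k) \) to second order. Writing \( k = np_n + \sqrt{2np_n(1-p_n)}\,y \), using \( (n-k)p_n - k(1-p_n) = np_n - k = -\sqrt{2np_n(1-p_n)}\,y \) in the first-order term and \( (n-k)p_n + k(1-p_n) = 2np_n(1-p_n)\bigl(1+o(1)\bigr) \) in the second-order term, the generator of \( N^{(n)} \) applied to \( \phi_n \) becomes
\[
(n-k)p_n\bigl[\phi_n(k+1)-\phi_n(k)\bigr] + k(1-p_n)\bigl[\phi_n(k-1)-\phi_n(k)\bigr] = -y\,g'(y) + \tfrac12\,g''(y) + R_n(y),
\]
where \( \sup_y |R_n(y)| = O\bigl((np_n(1-p_n))^{-1/2}\bigr) \) on the support of \( g \). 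This is the only place the hypothesis \( np_n(1-p_n) \to \infty \) enters, and it is exactly what forces \( R_n \to 0 \); the limit \( \mathcal L g = -y\,g' + \tfrac12\,g'' \) is the generator of the Ornstein--Uhlenbeck process described in the lemma.

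To pass from generator convergence to convergence in distribution on path space \( D_\R[0,\infty) \) I would invoke the standard limit theorem for Markov processes (Trotter--Kato together with the convergence theorem of Ethier--Kurtz), having checked that the initial laws converge: with \( X^{(n)}_0 \sim \pi_{p_n} \) the central limit theorem gives \( Y^{(n)}_0 \Rightarrow N(0,\tfrac12) \), the stationary law of \( (Z_t) \). For the finite-dimensional distributions a hands-on alternative is available: \( N^{(n)}_t = \sum_{i=1}^n \frac{X^{(n)}_t(i)+1}{2} \) is a sum of \( n \) i.i.d. two-state chains, so \( \bigl(Y^{(n)}_{t_1},\dots,Y^{(n)}_{t_m}\bigr) \) is a normalized sum of i.i.d. bounded vectors whose covariance matrix is \( \bigl(\tfrac12 e^{-|t_j-t_\ell|}\bigr)_{j,\ell} \) for every \( n \), and the multivariate Lyapunov CLT applies, its third-moment condition again reducing to \( np_n(1-p_n)\to\infty \); the limiting Gaussian vector has exactly the covariance of the stationary OU process. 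I expect the only real obstacle to be the path-space (as opposed to finite-dimensional) statement, i.e. tightness in \( D_\R[0,\infty) \): on the CLT route one gets it from a uniform second-moment bound on increments, \( \E_{p_n}\bigl[(Y^{(n)}_{\tau+h}-Y^{(n)}_\tau)^2\bigr] \lesssim h\bigl(1+h\,|Y^{(n)}_\tau|^2\bigr) \) (the increment is governed by the \( \bin(n,1-e^{-h}) \) number of coordinates that update, each getting a fresh resample), together with a maximal bound on \( \sup_{t\le T}|Y^{(n)}_t| \) and Aldous's criterion, while on the generator route it is subsumed by the cited theorem. Beyond this the argument is routine, and since \( (N^{(n)}_t) \) is a classical linear birth--death (Moran-type) chain the whole statement may equally well be attributed directly to~\cite{kt1981}; the one point genuinely worth flagging is the necessity of \( np_n(1-p_n)\to\infty \), without which there is no Gaussian fluctuation scale at all.
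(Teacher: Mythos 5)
Your proposal is correct, but it is doing considerably more work than the paper, which offers no proof at all: Lemma~\ref{lemma: OU process} is stated as a known result and attributed to Karlin and Taylor~\cite{kt1981}, pp.~170--173, exactly as you suggest at the end ("may equally well be attributed directly to~\cite{kt1981}"). Your generator computation is the right one and checks out: \( \|X^{(n)}_t\| \) is the birth--death chain with rates \( (n-k)p_n \) and \( k(1-p_n) \), the drift identity \( (n-k)p_n - k(1-p_n) = np_n - k \) gives the \( -y\,g'(y) \) term exactly, the diffusion coefficient \( (n-k)p_n + k(1-p_n) = 2np_n(1-p_n)(1+o(1)) \) on compacts gives \( \tfrac12 g''(y) \), and the limiting generator \( -y\,g' + \tfrac12 g'' \) is consistent with the stated conditional mean \( ze^{-s} \), conditional variance \( (1-e^{-2s})/2 \), and stationary law \( N(0,\tfrac12) \) matched by the CLT for \( \bin(n,p_n) \). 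Your increment bound for Aldous's criterion and the i.i.d.-summands route to the finite-dimensional distributions (covariance \( \tfrac12 e^{-|t_j-t_\ell|} \)) are also correct. Two points worth flagging explicitly. First, the hypothesis as printed in the lemma, \( \lim_n np_n(1-p_n)=0 \), is a typo for \( \lim_n np_n(1-p_n)=\infty \); you silently (and correctly) work with the latter, and your remark that this is where the hypothesis enters is exactly right --- it deserves to be said that the statement as written is vacuous otherwise. Second, the paper's displayed SDE \( dZ_t = -Z_t\,dt + \sqrt2\,dW_t \) has infinitesimal variance \( 2 \), which is inconsistent with the announced \( \sigma^2(z)=1 \) and with the variance formula \( (1-e^{-2s})/2 \); your normalization agrees with the variance formula and the rest of the paper, which is the version actually used.
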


\begin{proof}[Proof of Theorem~\ref{theorem: striped example}]

 For \( n \in \mathbb{N}\), define  \( (\alpha_n) \)  by 
\[
\alpha_n \coloneqq   \left( 2np_n(1-p_n) \right)^{1/4}
\]
and note that by assumption, 
\begin{equation}\label{eq: alpha convergence}
 \lim_{n \to \infty} \alpha_n = \infty .
\end{equation}
Now note that
\begin{align*}
&\left| \| X_0^{(n)} \| -\| X_t^{(n)} \| \right| > 2\alpha_n 
\\&\qquad \Leftrightarrow 
\left| \frac{\| X_0^{(n)} \|-np_n}{\sqrt{2np_n(1-p_n)}} -\frac{\| X_t^{(n)} \|-np_n}{\sqrt{2np_n(1-p_n)}} \right| > \frac{2\alpha_n}{\sqrt{2np_n(1-p_n)}} = 2\alpha^{-1}.
\end{align*}
Combining this with~\eqref{eq: alpha convergence} and Lemma~\ref{lemma: OU process}, we obtain that  for any fixed \( t > 0 \) we have that
\[
\lim_{n \to \infty} P\left[   \left| \| X_0^{(n)} \| -\| X_t^{(n)} \| \right| > 2\alpha_n  \right]= 1.
\]
This implies in particular that for all \( \varepsilon > 0 \),
\begin{equation}\label{eq: distance thing}
\lim_{n \to \infty} P\left[\sup_{t \in (0,\varepsilon)} \left| \| X_0^{(n)} \| -\| X_t^{(n)} \| \right| > 2\alpha_n  \right]= 1.
\end{equation}

Now given a sequence \( f_n \colon \{ -1,1 \}^n \), for each \( n \) let  \( a_n < \alpha_n \) be  a non-negative integer and set 
\[
 L_n \coloneqq \{ a_n + i \cdot \alpha_n  ,\, i=0,1,2, \ldots \} \cap \{ 0,1, \ldots, n \} .
\]
For notational simplicity we assume that \(\alpha_n   \) is an integer.
Now note that for each \( n \), there are exactly \( \alpha_n  \) possible ways to choose \( a_n \), and the sets \( L_n \) we get for different \( a_n  \) form a  partition of \( [n] \). Consequently,  there will be at  least one way to choose  \( (a_n) \) such that
\begin{equation}\label{eq: example equation}
P \left[   \| X_0^{(n)} \| \in L_n   \right] \leq \alpha_n^{-1}
\end{equation}
for all \(n \).
Fix such a sequence \( (a_n) \) and for \( x \in \{ -1,1 \}^n \) define
\begin{equation}\label{eq: def of g_n}
g_n(x) \coloneqq \begin{cases} f_n(x) & \text{if } \| x \| \not \in L_n \cr  1 & \text{if } \| x \|  = a_n + i \cdot \alpha_n \text{ for }  i \textnormal{ odd}    \cr  -1 & \text{if } \| x \| = a_n + i \cdot \alpha_n \text{ for }  i \textnormal{ even}. \end{cases}
\end{equation}
Then
\[
 P\left[f_n(X_0) \not = g_n(X_0)\right] \leq \alpha_n^{-1}
 \]
 and hence  \( (f_n) \) and \( (g_n) \) are \( o(1) \)-close by~\eqref{eq: alpha convergence}~and~\eqref{eq: example equation}. Finally, it follows from~\eqref{eq: distance thing}~and~\eqref{eq: def of g_n} that \( (g_n) \) is volatile.

\end{proof}

\section{Proof of Theorem~\ref{theorem: noise stable and volatile}}

The proof of Theorem~\ref{theorem: noise stable and volatile} will be divided over the following two subsections. In the first subsection, we will prove that there is a symmetric, monotone, nondegenerate and  noise stable sequence of Boolean functions that is not \( o(1)\)-close to any non-volatile sequence of Boolean functions. In the second subsection we will prove that there is a nondegenerate and noise stable sequence of Boolean functions that are invariant under permutations of the coordinates, that is not \( o(1)\)-close to any non-volatile sequence of Boolean functions.

\subsection{Proof of the first part of Theorem~\ref{theorem: noise stable and volatile} (monotone and symmetric)}

Let \((\ell_i)_{i \geq 1}\) be a strictly increasing sequence of positive integers with \(   \ell_1 = 1 \), and for \( i = 1,2, 3, \ldots \) define sets \( S_i \subseteq \mathbb{N} \) by    
\[
 S_i =   \{ 1 + \sum_{j = 1}^{i-1} \ell_j ,2 + \sum_{j = 1}^{i-1} \ell_j , \ldots, \ell_i + \sum_{j = 1}^{i-1} \ell_j  \} .
\]
For \( x \in \{ -1,1 \}^n \), let \( i(x) \) be the largest index \( i \) such that
\( x(S_j) = x(S_k) \) for all \( j,k \in S_i \) and
 \( \sum_{j = 1}^i \ell_j \leq n \). Define \( f_n(x) \) by
\[
f_n(x) =
\sgn(x( S_{i(x)})).
\]
In other words, let \( f_n(x) \) be the sign of \( x \) in the last constant block \( S_{i(x)} \).

We will   prove the following proposition, from which the first part of Theorem~\ref{theorem: noise stable and volatile} follows.

\begin{proposition}\label{proposition: block fcn result}
Let \( (\ell_i)\) and \( (f_n)\) be as above. Then \( (f_n) \) is symmetric, nondegenerate and  monotone. Further, if
\begin{equation}\label{eq: bounded sum}
\sum 2^{-\ell_i} \ll \infty
\end{equation}
and 
\begin{equation}\label{eq: infinite sum}
\sum \ell_i 2^{-\ell_i} = \infty.
\end{equation}
then \( (f_n) \) is noise stable, volatile and not \( o(1) \)-close to any non-volatile sequence of functions.
\end{proposition}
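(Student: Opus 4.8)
The plan is to verify each claimed property of $(f_n)$ in turn, with the noise-stability estimate and the "not $o(1)$-close to any non-volatile sequence" assertion being the substantive parts.

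First I would record the easy structural properties. Symmetry: flipping all coordinates of $x$ changes the sign of every block, hence changes $f_n(x)$ to $-f_n(x)$, so $f_n(-x)=-f_n(x)$. Monotonicity: I would check that flipping a single coordinate from $-1$ to $1$ cannot decrease $f_n$ — if the flipped coordinate lies in the block $S_{i(x)}$ one must argue carefully about how $i(x)$ can change, since increasing a coordinate can either break the block $S_{i(x)}$ (if it was all $-1$'s, now it is not constant, so $i(x)$ decreases) or it was already $+1$; the cleanest route is to observe that $f_n$ has an explicit monotone "last-constant-block-sign" description and to check the two cases $x(S_{i(x)}) = +1$ and $x(S_{i(x)}) = -1$ directly, noting that in the second case the block $S_{i(x)}$ consists of all $-1$'s, so raising a coordinate inside it makes that block non-constant and the new $i$ points to an earlier all-constant block, whose sign we must show is forced to be $+1$ — actually the simplest argument is that conditioning on a coordinate being raised only increases $P[f_n = 1]$, which I would verify via a coupling that raises one coordinate. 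Nondegeneracy: by symmetry $\E[f_n(X_0)] = 0$ for all $n$ when $p_n = 0.5$ (the distribution of $x$ and $-x$ agree), so trivially $-1 < \liminf \le \limsup < 1$. The assumption $\sum 2^{-\ell_i} < \infty$ guarantees that with probability bounded below, $i(x)$ is finite and small, which is what makes the remaining estimates tractable — in particular $\ell_1 = 1$ ensures $i(x) \ge 1$ always.

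Next, noise stability. The key observation is that $f_n(X_\varepsilon) \ne f_n(X_0)$ forces one of two events: either the index $i(X_0)$ is large (say $\ge K$), which has probability $\le \sum_{i \ge K} 2^{-\ell_i} \cdot(\text{something})$ — more precisely the event that blocks $S_1,\dots,S_{K-1}$ are all constant costs $\prod_{i<K} 2^{1-\ell_i}$, and since $\sum 2^{-\ell_i} < \infty$ the tail $\sum_{i \ge K} 2^{-\ell_i} \to 0$, so the index is tight; or the noise actually changed the sign of the relevant block, which requires at least one of the $\le \sum_{i \le K}\ell_i$ coordinates in blocks $S_1, \dots, S_K$ to be resampled to a disagreeing value, and by a union bound this has probability $\le (1 - e^{-\varepsilon})\sum_{i\le K}\ell_i \to 0$ as $\varepsilon \to 0$ for each fixed $K$. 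Picking $K$ large then $\varepsilon$ small gives $\lim_{\varepsilon \to 0}\sup_n P[f_n(X_\varepsilon) \ne f_n(X_0)] = 0$. I would need to be slightly careful that changing a coordinate outside the first $K$ blocks but inside block $S_{i(X_0)}$ with $i(X_0) < K$ is already covered, and that lengthening $i(X_0)$ (a block past $S_{i(X_0)}$ becoming constant) can also flip $f_n$ — but that too requires resampling coordinates in some block $S_i$, and for the flip to matter we again land in the union bound over small-indexed blocks after conditioning on the index being $\le K$.

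The main obstacle is the last clause: $(f_n)$ is volatile and not $o(1)$-close to any non-volatile sequence. Volatility of $(f_n)$ itself should follow because, starting from $X_0$, within any time $\delta$ some coordinate in the block $S_{i(X_0)}$ flips with probability tending to $1$ (the block has length $\ell_{i(X_0)} \ge \ell_1 = 1$ and the chain runs at rate $1$ per coordinate), and such a flip either changes $f_n$ directly or demotes $i(X_0)$; combined with the fact that under $\sum \ell_i 2^{-\ell_i} = \infty$ the expected block length $\E[\ell_{i(X_0)}]$ diverges, one sees the chain moves quickly. For the "not $o(1)$-close to any non-volatile sequence" part — which is the real content — I would argue by contradiction: suppose $(g_n)$ is non-volatile and $o(1)$-close to $(f_n)$, so $P[\tau_{\partial g_n} > \delta] \not\to 0$ for some $\delta$, i.e. with probability bounded below along a subsequence the chain stays in $\{x : g_n(x) = g_n(X_0)\}$ for all $t \in [0,\delta]$. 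The strategy is to show this forces $X_0$ to lie, with non-negligible probability, in a "large-block" region where $f_n$ and $g_n$ must disagree somewhere nearby: because the chain run for time $\delta$ explores, with good probability, a whole range of configurations of the block $S_{i(X_0)}$, and $f_n$ takes both values $+1$ and $-1$ on that explored set whenever $\ell_{i(X_0)} \ge 2$; if $g_n$ agrees with $f_n$ at all those visited points then $g_n$ is non-constant there, contradicting $g_n(X_t) = g_n(X_0)$; so $g_n$ must disagree with $f_n$ at one of the $O(\ell_{i(X_0)})$ nearby points — and summing the disagreement probability $P[f_n(Y) \ne g_n(Y)]$ over these visited points, using stationarity of the chain and the hypothesis $\sum \ell_i 2^{-\ell_i} = \infty$ (which makes the expected number of visited points in the active block diverge, so a fixed $o(1)$ disagreement rate cannot cover them all), yields a contradiction. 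Making the probabilistic bookkeeping in this last step rigorous — in particular controlling the joint law of $(X_0, (X_t)_{t \le \delta})$ restricted to the active block, and converting "$g_n = f_n$ off a set of vanishing $\pi_{p_n}$-measure" into "$g_n = f_n$ on all the $O(\ell_{i(X_0)})$ points the chain visits with decent probability" — is where the work lies, and I expect it to require the Ornstein–Uhlenbeck / block-resampling heuristics together with the divergence hypothesis~\eqref{eq: infinite sum} in an essential way.
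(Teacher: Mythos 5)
The easy structural parts are fine, but the volatility argument and the ``not $o(1)$-close'' argument both rest on a misreading of where the divergence hypothesis~\eqref{eq: infinite sum} enters, and neither would go through as sketched. For volatility, you claim that ``some coordinate in the block $S_{i(X_0)}$ flips with probability tending to $1$'' and lean on $\E[\ell_{i(X_0)}]$ diverging — but the law of $i(X_0)$ is \emph{tight} (since $P[i(X_0)=i]\leq 2\cdot 2^{-\ell_i}$ and $\sum 2^{-\ell_i}<\infty$), so $\ell_{i(X_0)}$ is typically bounded, the probability that no coordinate in $S_{i(X_0)}$ flips by time $\delta$ is $e^{-\delta\ell_{i(X_0)}}$, which stays bounded away from zero, and the divergence of the mean is driven entirely by an event of vanishing probability. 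The actual mechanism is different: the paper uses Lemma~\ref{lemma: Anders lemma} to lower-bound, for each \emph{nonconstant} block $S_j$ with $j>i(X_0)$, the probability (of order $\ell_j 2^{-\ell_j}$) that $X_t(S_j)$ hits the all-ones state before time $\delta$; since these blocks update independently and $\sum_j \ell_j 2^{-\ell_j}=\infty$, with probability $\to1$ some later block becomes monochrome, which together with symmetry and tightness of $\mu_n$ flips $f_n$. Your sketch omits the hitting-time lemma entirely, and it is essential. A smaller gap of the same flavour appears in the noise-stability estimate: you assert that a block $S_j$ with $j>K$ becoming constant at time $\varepsilon$ ``lands in the union bound over small-indexed blocks,'' but it does not — it requires flips in $S_j$ alone, and has to be handled by a separate sum $\sum_{j>i} P\left[X_\varepsilon(S_j)\equiv-1\mid X_0(S_j)\text{ nonconstant}\right]$, whose tail is killed by~\eqref{eq: bounded sum}.

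For the ``not $o(1)$-close to any non-volatile sequence'' clause, the proposed argument — sum the disagreement probability over the $O(\ell_{i(X_0)})$ states the chain visits in the active block, and appeal to divergence of $\sum\ell_i 2^{-\ell_i}$ — fails for the same reason: the number of such visits is tight, not divergent in probability, and on the rare event $\{i(X_0)=k\}$ with $\ell_k$ large the conditional disagreement probability is only bounded by $o(1)/\mu_n(k) = o(2^{\ell_k})$, which is far too weak to beat a union bound over roughly $\ell_k$ points. The paper's argument is built on a completely different device: fix a scale $m$, observe that $\tau_{\partial f_m}$ is a stopping time of the \emph{first $m$ coordinates only}, and then use (a) tightness of $\mu_n$ to get $f_n\approx f_m(\cdot\restriction_{[m]})$ with high probability, and (b) independence across coordinates to see that $X_{\tau_{\partial f_m}}$ has a first-$m$ marginal within a factor $2^m$ of uniform while its remaining coordinates are exactly stationary, so that for fixed $m$ the $o(1)$-closeness of $(f_n)$ and $(g_n)$ can still be applied at the stopping time. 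That localization step — comparing $f_n$ to its truncation $f_m$ and exploiting that $\tau_{\partial f_m}$ sees only finitely many coordinates — is the key idea missing from your sketch, and I do not see how to recover the result without something of that kind.
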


In the proof of Proposition~\ref{proposition: block fcn result} we will use the following lemma, whose proof is essentially  due to Anders Martinsson.
\begin{lemma}\label{lemma: Anders lemma}
Let \( \tau \) be the hitting time of the state \( x = (1,1,\ldots,1)\in \{ -1,1\}^n \) and let \( \delta > 0 \). Then
\begin{equation*}
P[\tau \leq \delta] \geq  \frac{1 - e^{-\delta}}{2} \cdot  n 2^{-n}.
\end{equation*}
\end{lemma}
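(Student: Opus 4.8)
The plan is to derive and integrate a differential inequality for $h(s) := P[\tau \le s]$. Throughout $p_n = 1/2$, so the initial (and stationary) law is uniform on $\{-1,1\}^n$; write $\mathbf 1$ for the state $x = (1,\dots,1)$ of the lemma and, for $j \in \{1,\dots,n\}$, let $\mathbf 1^{(j)}$ be the state agreeing with $\mathbf 1$ except that its $j$-th coordinate is $-1$ (these are exactly the states at Hamming distance $1$ from $\mathbf 1$). Then $h(0) = P[X_0 = \mathbf 1] = 2^{-n}$ and, by stationarity, $P\bigl[X_s \in \{\mathbf 1^{(1)},\dots,\mathbf 1^{(n)}\}\bigr] = n 2^{-n}$ for every $s \ge 0$.

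\textbf{Key estimate.} I claim that $h'(s) \ge \tfrac12\bigl(n2^{-n} - h(s)\bigr)$ for all $s \ge 0$. Fix $s$ and a small $\varepsilon > 0$. To bound $h(s+\varepsilon) - h(s) = P[s < \tau \le s+\varepsilon]$ from below, restrict to the event that $\tau > s$, that $X_s = \mathbf 1^{(j)}$ for some $j$, and that during $(s,s+\varepsilon]$ coordinate $j$ updates exactly once (to the value $+1$) while no other coordinate updates; on this event $X_{s+\varepsilon} = \mathbf 1$, hence $\tau \le s+\varepsilon$. The clock/resampling event has probability $\tfrac12\varepsilon e^{-n\varepsilon} = \tfrac\varepsilon2(1+o(1))$ and is independent of the history up to time $s$, so, the events $\{X_s = \mathbf 1^{(j)}\}$ being disjoint,
\[
h(s+\varepsilon) - h(s) \ \ge\ \tfrac12\varepsilon e^{-n\varepsilon}\sum_{j=1}^{n} P\bigl[\tau > s,\ X_s = \mathbf 1^{(j)}\bigr].
\]
Using $P[A \cap B^c] \ge P[A] - P[B]$ and the stationarity identity above,
\[
\sum_{j=1}^n P\bigl[\tau > s,\ X_s = \mathbf 1^{(j)}\bigr] \ =\ P\bigl[\tau > s,\ X_s \in \{\mathbf 1^{(1)},\dots,\mathbf 1^{(n)}\}\bigr] \ \ge\ n2^{-n} - h(s).
\]
Dividing by $\varepsilon$ and letting $\varepsilon \downarrow 0$ gives the claimed inequality; here I use that $h$ is differentiable on $(0,\infty)$, which holds since the hitting time of a state by a continuous-time Markov chain on a finite state space has a density there. (One can instead keep the discrete inequality $h(s+\varepsilon)-h(s)\ge\tfrac12\varepsilon e^{-n\varepsilon}(n2^{-n}-h(s))$, iterate it over a fine partition of $[0,\delta]$, and pass to the limit, avoiding differentiability altogether.)

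\textbf{Integration and conclusion.} Rewriting the key estimate as $\bigl(e^{s/2}h(s)\bigr)' \ge \tfrac12 n2^{-n} e^{s/2}$ and integrating over $[0,\delta]$ with $h(0) = 2^{-n}$ gives $e^{\delta/2}h(\delta) \ge 2^{-n} + n2^{-n}(e^{\delta/2}-1)$, that is,
\[
h(\delta) \ \ge\ 2^{-n}\bigl(1 + (n-1)(1 - e^{-\delta/2})\bigr).
\]
Finally, with $a := 1 - e^{-\delta/2} \in (0,1)$ one has $1 - e^{-\delta} = a(2-a)$, so
\[
1 + (n-1)(1-e^{-\delta/2}) - \tfrac{1-e^{-\delta}}{2}n \ =\ 1 + (n-1)a - \tfrac n2 a(2-a) \ =\ 1 - a + \tfrac{n}{2}a^2 \ >\ 0,
\]
which yields $P[\tau \le \delta] = h(\delta) \ge \tfrac{1-e^{-\delta}}{2}\, n2^{-n}$.

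The main obstacle is the key estimate: one must justify rigorously the infinitesimal ``entrance rate'' lower bound — that entries into $\mathbf 1$ through the $n$ distinct neighbours $\mathbf 1^{(j)}$ contribute additively, and that conditioning on the survival event $\{\tau > s\}$ depletes the count $\sum_j P[X_s = \mathbf 1^{(j)}] = n2^{-n}$ by at most $h(s)$ — together with the (standard but worth stating) differentiability of $h$. Everything downstream is a routine linear-ODE comparison and an elementary inequality.
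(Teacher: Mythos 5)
Your proof is correct, but it takes a genuinely different route from the paper's. The paper argues via occupation times: by stationarity \( \delta 2^{-n} = \E\bigl[\int_0^\delta \mathbf{1}_{X_t = 1^n}\,dt\bigr] \), and by the strong Markov property the conditional occupation time of \( 1^n \) after \( \tau \), given \( \tau \le \delta \), is at most \( \E\bigl[\int_0^\delta \mathbf{1}_{X_t = X_0}\,dt\bigr] = \int_0^\delta \bigl(\tfrac{1+e^{-t}}{2}\bigr)^n dt \le \tfrac{2\delta}{n(1-e^{-\delta})} \); rearranging gives the bound with no infinitesimal analysis at all. You instead track the entrance rate into \( 1^n \) from its \( n \) neighbours and integrate the resulting differential inequality \( h'(s) \ge \tfrac12(n2^{-n}-h(s)) \). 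Your key probabilistic steps are all valid: the clock/resampling event on \( (s,s+\varepsilon] \) has probability \( \tfrac12\varepsilon e^{-n\varepsilon} \) and is independent of the history up to time \( s \), the events \( \{X_s = \mathbf 1^{(j)}\} \) are disjoint, and the depletion bound \( P[\tau>s,\ X_s \in N] \ge n2^{-n}-h(s) \) is correct; the regularity of \( h \) that you need is standard for hitting times of finite-state chains, and you rightly note the discrete Gronwall iteration as a fallback. What your approach buys is a slightly sharper intermediate bound, \( h(\delta) \ge 2^{-n}\bigl(n-(n-1)e^{-\delta/2}\bigr) \), which dominates \( \tfrac{1-e^{-\delta}}{2}n2^{-n} \) since \( 1-e^{-\delta/2} > \tfrac{1-e^{-\delta}}{2} \); your closing computation \( 1-a+\tfrac n2 a^2>0 \) correctly verifies this. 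What the paper's approach buys is brevity and robustness: it needs only the stationarity identity and an elementary calculus bound, with no differentiability or limiting argument to justify.
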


\begin{proof}

Let \( 1^n \) denote the element \( (1,1,\ldots,1) \in \{ -1,1 \}^n\). Then
\begin{equation*}
\begin{split}
\delta 2^{-n} 
&= \E\left[  \int_0^\delta \mathbf{1}_{X_t =1^n} \, dt \right]
= P[\tau \leq \delta] \cdot \E\left[  \int_0^\delta \mathbf{1}_{X_t = 1^n} \, dt \mid \tau \leq \delta \right]
\\
&= 
P[\tau \leq \delta] \cdot \E\left[  \int_\tau^\delta \mathbf{1}_{X_t = 1^n} \, dt \mid \tau \leq \delta \right]
\leq
P[\tau \leq \delta] \cdot \E\left[  \int_0^\delta \mathbf{1}_{X_t = X_0} \, dt  \right]
\\&= P[\tau \leq \delta] \cdot \int_0^\delta P[X_t =X_0]\, {d}t
= P(\tau \leq \delta) \cdot \int_0^\delta \left(\frac{1 + e^{-t}}{2} \right)^n  \, {d}t
\\&= P[\tau \leq \delta] \cdot \frac{1}{n} \int_0^{\delta n} \left(\frac{1 + e^{-s/n}}{2} \right)^n  \, {d}s.
\end{split}
\end{equation*}
For \( s \leq \delta n \) we have that
\begin{equation*}
\begin{split}
&\left(\frac{1 + e^{-s/n}}{2} \right)^n 
=
\left( 1 - \frac{1 - e^{-s/n}}{2} \right)^n 
=
\left( 1 - \frac{s}{2n} \cdot \frac{1 - e^{-s/n}}{s/n} \right)^n 
\\&\qquad\leq
\left( 1 - \frac{s}{2n} \cdot \frac{1 - e^{-\delta}}{\delta} \right)^n 
\leq
\exp\left(  - \frac{s}{2} \cdot \frac{1 - e^{-\delta}}{\delta} \right)
\end{split}
\end{equation*}
and hence 
\begin{equation*}
\begin{split}
& \int_0^{\delta n} \left(\frac{1 + e^{-s/n}}{2} \right)^n  \, {d}s
\leq
 \int_0^\infty\exp\left(  - \frac{s}{2} \cdot \frac{1 - e^{-\delta}}{\delta} \right)  \, {d}s
\\
&\qquad=
 \left[ -\frac{2\delta} {1 - e^{-\delta}}\exp\left(  - \frac{s}{2} \cdot \frac{1 - e^{-\delta}}{\delta} \right)   \right]_0^\infty 
=
\frac{2\delta} {1 - e^{-\delta}}.
\end{split}
\end{equation*}
Rearranging, we  obtain
\[
P\left[ \tau \leq \delta \right] \geq   \frac{1 - e^{-\delta}}{2} \cdot  n 2^{-n}
\]
which is the desired conclusion.

\end{proof}

We now give a proof of Proposition~\ref{proposition: block fcn result}.

\begin{proof}[Proof of Proposition~\ref{proposition: block fcn result}]
Note first that symmetry and monotonicity is immidiate from the definiton, and that any symmetric function is nondegenerate with respect to \( p_n = 1/2\).

To see that \( (f_n) \) is volatile,    note  first that if we define
\begin{equation*}
\mu_n(i) \coloneqq P\left[S_i \textnormal{ is the last constant block  in } X_0\right] 
\end{equation*}
then
\begin{equation*}
\mu_n(i)  = 2 \cdot 2^{-\ell_i}  \prod_{j = i+1} (1 - 2 \cdot  2^{-\ell_i}) \leq 2 \cdot 2^{-\ell_i}
\end{equation*}
and hence it follows from~\eqref{eq: bounded sum} that \( ( \mu_n ) \) is tight. 

Now for any \( m \in \mathbb{N}\), let \( 1^m \) denote the element \( (1,1,\ldots,1) \in \{-1,1\}^m\). Then for any fixed \( i \in \mathbb{N} \) we have that
\begin{equation*}
\begin{split}
&\lim_{n \to \infty} P \left[ \textnormal{there is no } j>i \textnormal{ and } t<\delta \textnormal{ such that } X_t(S_j) =1^{|S_j|}  \right]
\\&\qquad\leq \lim_{n \to \infty}  \prod_{j > i} \left(1 - \frac{1-e^{-\delta}}{2} \cdot  \ell_j 2^{-(\ell_j+1)} \right)
\\& \qquad \leq \lim_{n \to \infty}  \exp \left(  -  \frac{1-e^{-\delta}}{2}\cdot   \sum_{j > i}  \ell_j 2^{-(\ell_j+1)} \right)
= 0
\end{split}
\end{equation*}
where the first inequality is obtained by  using Lemma~\ref{lemma: Anders lemma} and the limit  follows from~\eqref{eq: infinite sum}. Together with the fact that \( ( \mu_n ) \) is tight, this implies that \( (f_n) \) is volatile.

To see that \( (f_n) \) is noise stable, note first that
\begin{equation*}
\begin{split}
& P\left[f_n(X_\varepsilon) \not = f_n (X_0) \mid i(X_0)=i \right] 
\\[1ex]&\qquad  \leq
P\left[X_\varepsilon (S_i) \not \equiv 1 \mid X_0(S_i) \equiv 1 \right] 
\\&\qquad\qquad + \sum_{j > i} P\left[X_\varepsilon(S_j)) \equiv -1 \mid X_0(S_j) \not \in \{ (-1)^n, 1^n \}\right]
\\[-0.5ex]&\qquad  = 
 \left( 1  - \left( \textstyle\frac{1 + e^{-\varepsilon}}{2} \right)^{\ell_i} \right) + \sum_{j > i} \textstyle\frac{2 \cdot 2^{-\ell_j}}{1 - 2\cdot 2^{-\ell_j}} \left( 1 - \left( \textstyle\frac{1 - e^{-\varepsilon}}{2} \right)^{\ell_j} - \left( \textstyle\frac{1+ e^{-\varepsilon}}{2} \right)^{\ell_j} \right)
\\&\qquad  \leq 
 \left( 1  - \left( \textstyle\frac{1 + e^{-\varepsilon}}{2} \right)^{\ell_i} \right) + \sum_{j > i} \textstyle\frac{2 \cdot 2^{-\ell_j}}{1 - 2\cdot 2^{-\ell_j}} \left( 1 - \left( \textstyle\frac{1+ e^{-\varepsilon}}{2} \right)^{\ell_j} \right) 
\\&\qquad  \leq 
 \left( 1  - \left( \textstyle\frac{1 + e^{-\varepsilon}}{2} \right)^{\ell_i} \right) + 4\sum_{j \geq 2}  2^{-\ell_j} \left( 1 - \left( \textstyle\frac{1+ e^{-\varepsilon}}{2} \right)^{\ell_j} \right) .
\end{split}
\end{equation*}
Using~\eqref{eq: bounded sum} and the fact that \( (\mu_n) \)  is tight, it now follows directly that \( (f_n) \) is noise stable.

It now remains only to argue that if  \( g_n \colon \{ -1,1 \}^n \to \{ -1,1 \} \) and \( (f_n) \) and \( (g_n) \) are \( o(1)\)-close, then \( (g_n)\) must be volatile.
To this end, fix  \( \delta > 0\) and let \( \varepsilon > 0\) be chosen arbitrarily. 
We can then  pick \( N = N(\varepsilon,\delta) \)  such that
\begin{enumerate}
	\item[(i)] \( P\left[ \tau_{\partial f_n}>\delta \right] < \varepsilon \) for all \( n > N\) (this follows from the volatility of \( (f_n) \)).
	\item[(ii)] \(P\left[ \max S_{i(X_0)} > m \right] < \varepsilon \) for all \( m > M \) (this follows from the tightness of \( (\mu_n)\))
	\item[(iii)]  For all \( m > N\) and \( n \geq m \), \( P \left[ f_m(X_0([m])) \not =  f_n(X_0) \mid X_0([m]) = x([m])\right] < \varepsilon  \)  
\end{enumerate}
Now fix \( m > N\). Then
\begin{align*}
&\liminf_{n \to \infty} P \left[ 
\tau_{\partial g_n} < \delta \right] 
\\&\qquad\geq 
\liminf_{n \to \infty}P \left[ 
\begin{matrix} \tau_{\partial f_m} < \delta,\, 
\max S_{i(X_0)}<m,\,
f_n(X_0)=g_n(X_0) \\[1ex]
\textnormal{and }
f_m(X_{\tau_{\partial f_m}}([m]))=f_n(X_{\tau_{\partial f_m}})=g_n(X_{\tau_{\partial f_m}})
\end{matrix}
\right] 
 \\&\qquad=
\liminf_{n \to \infty}P \left[ 
\tau_{\partial f_m} < \delta,\, 
\max S_{i(X_0)}<m \textnormal{ and }
f_m(X_{\tau_{\partial f_m}}([m]))=f_n(X_{\tau_{\partial f_m}})=g_n(X_{\tau_{\partial f_m}})
\right] 
 \\&\qquad\overset{(i)}{\geq}
 \liminf_{n \to \infty}P \left[ 
 \max S_{i(X_0)}<m \textnormal{ and }
 f_m(X_{\tau_{\partial f_m}}([m]))=f_n(X_{\tau_{\partial f_m}})=g_n(X_{\tau_{\partial f_m}})
 \right] 
 -\varepsilon 
\\&\qquad\overset{(ii)}{\geq}
\liminf_{n \to \infty}P \left[ 
f_m(X_{\tau_{\partial f_m}}([m]))=f_n(X_{\tau_{\partial f_m}})=g_n(X_{\tau_{\partial f_m}})
\right] 
-2\varepsilon 
\\&\qquad\overset{(iii)}{\geq}
\liminf_{n \to \infty}P \left[ 
f_n(X_{\tau_{\partial f_m}})=g_n(X_{\tau_{\partial f_m}})
\right] 
-3\varepsilon
\end{align*}
As \( \tau_{\partial f_m}\)  only depends on the first \( m \) coordinates of \( X_t \), and \( (f_n) \) and \( (g_n) \) are \( o(1)\)-close, we obtain
\[
\liminf_{n \to \infty} P \left[\tau_{\partial g_n} < \delta \right] \geq 1 - 3 \varepsilon.
\]
As \( \varepsilon > 0 \) was arbitrary, it follows that \( (g_n)\) is volatile.

\end{proof}

\clearpage

\subsection{Proof of the second part of Theorem~\ref{theorem: noise stable and volatile} (permutation invariance)}

The goal of this subsection is to give an example of a nondegenerate, noise stable and volatile sequence of Boolean functions \( f_n \colon \{ -1,1 \}^n \to \{ -1,1 \} \) that is not \( o(1) \)-close to any non-volatile sequence of Boolean functions. We will do this by  first constructing a sequence of functions \( f_n \colon \mathbb{Z}_n \to \{ -1,1 \}\), where \( \mathbb{Z}_n = \mathbb{Z}/n\mathbb{Z} \), with the analogous properties for \( \mathbb{Z}_n \) and then translating this construction to the hypercube.
To be able to do this we must first extend the definitions of noise stability and volatility to the setting of continuous time simple random walks  on \( \mathbb{Z}_n \). To this end, define \( Y_t^{(n)} \) to be  continuous time simple random walk on \( \mathbb{Z}_n \) which takes steps with rate \( n^2 \). The rate \( n^2 \) is chosen so that the relaxation time for this Markov chain is the same as the relaxation time for the random walk \( (X_t^{(n)}) \) on the hypercube \( \{ -1,1\}^n \). The stationary distribution for this Markov chain is the uniform distribution on \( \mathbb{Z}_n \) and we will always choose \( Y_0^{(n)} \) according to this distribution. We now define what we mean by being noise stable and volatile in this setting.

\begin{definition}
A sequence of functions \( f_n \colon \mathbb{Z}_n \to \{ -1,1 \} \) is said to be \emph{noise stable} if
\[
\lim_{\varepsilon \to 0 } \sup_n P \left[ f_n(Y_0^{(n)}) \not = f_n(Y_{\varepsilon}^{(n)}) \right] = 0
\]
\end{definition}

\begin{definition}
A sequence of functions \( f_n \colon \mathbb{Z}_n \to \{ -1,1 \} \) is said to be \emph{volatile} if for all \( \delta > 0 \),
\[
\lim_{n \to \infty} P\left[\tau_{\partial f_n} > \delta \right]= 0
\]
where \( \tau_{\partial f_n} \) is the hitting time of the set \( \{ x \in \mathbb{Z}_n \colon f_n(x) \not = f_n(Y_0) \} \).
\end{definition}

Using these two definitions, we can formulate the following proposition.

\begin{proposition}\label{proposition: analogue on circle}
There is a nondegenerate, noise stable and volatile sequence of functions \( f_n \colon \mathbb{Z}_n \to \{ -1,1 \} \) that is not  \( o(1) \)-close to any non-volatile sequence of functions  \( f_n \colon \mathbb{Z}_n \to \{ -1,1 \} \).
\end{proposition}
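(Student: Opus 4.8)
The plan is to construct $f_n \colon \mathbb{Z}_n \to \{-1,1\}$ as a function that is constant on a few long ``blocks'' of consecutive integers, in direct analogy with the block construction used for the monotone case, so that the random walk $Y_t^{(n)}$ leaves the current block quickly (giving volatility and robustness of volatility) but the boundaries between blocks are rare and most of the measure sits deep inside long blocks (giving noise stability). Concretely, I would fix a sequence $(\ell_i)_{i \geq 1}$ of block lengths, partition (most of) $\mathbb{Z}_n$ into consecutive arcs of lengths $\ell_1, \ell_2, \ell_3, \dots$ up to the point where they no longer fit, and let $f_n$ alternate sign on consecutive arcs. The natural conditions on $(\ell_i)$ mirror~\eqref{eq: bounded sum} and~\eqref{eq: infinite sum}: I would want $\sum_i \ell_i^2 / n \to 0$-type control replaced by the circle analogue — roughly, the fraction of points within distance $O(1)$ (in diffusive scaling, distance $O(1/n)$ of the circle) of a block boundary should tend to $0$, while the number of block boundaries the walk is guaranteed to hit in time $\delta$ should tend to infinity. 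Since a continuous-time simple random walk on $\mathbb{Z}_n$ with rate $n^2$ rescales, after dividing positions by $n$, to a Brownian motion on the circle $\mathbb{R}/\mathbb{Z}$, the diffusive scale is $1/n$: a block of length $\ell$ is crossed by the walk in time of order $(\ell/n)^2$. So I would take the $\ell_i$ growing slowly, e.g. $\ell_i \asymp \sqrt{i}$ or similar, chosen so that (a) $\max_i \ell_i^2 = o(n)$ isn't quite what's wanted — rather I need the first, say, $k(n)$ blocks (those that fit in $\mathbb{Z}_n$) to satisfy $\sum_{i \leq k(n)} \ell_i = n$ with $\ell_{k(n)} = o(n)$, (b) the fraction of $\mathbb{Z}_n$ within a fixed diffusive window of a boundary vanishes, and (c) in any time interval of length $\delta$ the walk a.a.s.\ crosses at least one boundary.

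The key steps, in order, are: (1) define $(\ell_i)$ and $f_n$ precisely, handling the ``leftover'' arc at the end (make it the final block, or absorb it) so that $f_n$ is genuinely defined on all of $\mathbb{Z}_n$; (2) check nondegeneracy — since blocks alternate sign and no single block has more than a vanishing fraction of the mass once $k(n) \to \infty$ and $\ell_{k(n)}/n \to 0$, the fractions of $+1$'s and $-1$'s both stay bounded away from $0$ and $1$; (3) prove volatility: condition on the block containing $Y_0$, note its length is $O(\ell_{k(n)}) = o(n)$ with high probability (here I need the analogue of tightness of $(\mu_n)$, i.e.\ the distribution of the length of the block containing a uniform point is tight, which follows if $\sum_i \ell_i^2$ is suitably controlled relative to $\sum_i \ell_i$), and then invoke the Brownian scaling limit to say that a BM started at a uniform point moves by more than any fixed amount within time $\delta$ with probability $\to 1$, so it exits the $o(1)$-length block — this is the circle analogue of~\eqref{eq: distance thing} combined with~\eqref{eq: infinite sum}; (4) prove noise stability: $P[f_n(Y_\varepsilon) \neq f_n(Y_0)]$ is bounded by the probability that the walk crosses a block boundary in time $\varepsilon$, which, conditioned on the block length $\ell$, is $O(\min(1, \varepsilon n^2 / \ell^2))$ — wait, I need this to be $\sup_n$-small as $\varepsilon \to 0$, so I must use that with high probability the point $Y_0$ is at distance $\gg \sqrt{\varepsilon}\,n$ from both ends of its block, which requires the within-window fraction bound and tightness of block lengths; (5) the robustness step — if $(g_n)$ is $o(1)$-close to $(f_n)$ then $(g_n)$ is volatile — is carried out exactly as in the monotone case: for fixed $m$, use volatility of the $m$-truncated picture, tightness of the block containing $Y_0$, and $o(1)$-closeness to transfer a boundary-crossing of $f$ before time $\delta$ into a boundary-crossing of $g$, incurring $O(\varepsilon)$ error terms; here I would need a sensible notion of ``restricting $f_n$ to the first $m$ coordinates'' — on $\mathbb{Z}_n$ the cleaner move is to note $f_n$ restricted to a window of $m$ consecutive sites agrees with a fixed ``limit pattern'' $f_\infty$ on $\mathbb{Z}$ once $n$ is large, so the argument runs through with $f_n([m])$ replaced by the pattern near $Y_0$.

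The main obstacle I anticipate is step (4), getting noise stability with the correct uniformity in $n$. On $\mathbb{Z}_n$ with rate $n^2$ the walk has diffusive fluctuations of size $\Theta(\sqrt{\varepsilon}\, n)$ over time $\varepsilon$, so if a block has length $\ell_i$ with $\ell_i/n \to 0$, then for any fixed $\varepsilon$ the walk will cross out of that block with probability bounded away from $0$ — which is exactly what we want for volatility but is in tension with noise stability. The resolution is that noise stability only asks that $P[f_n(Y_\varepsilon)\neq f_n(Y_0)]$ be small \emph{in the limit $\varepsilon \to 0$, uniformly in $n$}: even though short blocks are crossed quickly, only a vanishing fraction of the stationary mass lies in blocks of length $O(\sqrt{\varepsilon}\,n)$ provided the block lengths are tight and most mass sits in blocks long relative to $\sqrt{\varepsilon}\,n$ — but since block lengths are $o(n)$ they are \emph{not} long relative to $\sqrt{\varepsilon}\,n$ for fixed $\varepsilon$ and large $n$! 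This forces the construction: the $\ell_i$ must be chosen so that, simultaneously, $\ell_{k(n)} = o(n)$ (for nondegeneracy/volatility) but also the bulk of the mass is in blocks whose length, as a fraction of $n$, is \emph{not} too small — i.e.\ I should keep $\ell_i \to \infty$ but let the \emph{number} of distinct blocks that fit also be controlled so that a uniform point lands, with probability $\to 1$, in a block of length $\varepsilon$-many standard deviations, meaning I actually want $\ell_i \to \infty$ fast enough that $\ell_i / n \not\to 0$ along the bulk — contradiction again. The genuine fix, I believe, is to use blocks of length $\ell_i = \Theta(n/\log n)$ or $\Theta(n/k)$ for a fixed number $k = k(n) \to \infty$ of blocks: then each block has length a vanishing fraction $1/k(n)$ of $n$, a uniform point is within diffusive window $\sqrt{\varepsilon}\,n$ of a boundary with probability $O(k(n)\sqrt{\varepsilon})$, which for fixed $\varepsilon$ does \emph{not} go to $0$ — so noise stability genuinely fails unless... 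Hence the real construction must instead use \emph{nested} or \emph{multi-scale} blocks: a positive-density set of ``long'' blocks of length $\Theta(n)$ interspersed with increasingly many ``short'' blocks, so that a uniform point lands in a long block with probability bounded away from $0$ and $1$ while the short blocks guarantee the walk crosses \emph{some} boundary quickly. Sorting out this multi-scale construction so that all of nondegeneracy, volatility, robustness of volatility, and noise stability hold simultaneously — and in particular pinning down the exact summability conditions on the block-length sequence, analogous to~\eqref{eq: bounded sum}--\eqref{eq: infinite sum} — is the crux of the argument; everything else is a routine transcription of the monotone-case proof with the hypercube level-set walk replaced by the circle walk and its Brownian scaling limit.
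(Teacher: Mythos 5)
Your diagnosis that a direct translation of the alternating-block construction cannot work is correct, and you articulate the tension clearly: over a time-$\varepsilon$ window the walk moves diffusively on the scale $\sqrt{\varepsilon}\,n$, so any block of length $o(n)$ is exited with probability bounded away from $0$ for fixed $\varepsilon$, killing noise stability; but nondegeneracy and the robustness of volatility need the blocks to be $o(n)$. However, having correctly ruled out the naive construction, you stop short of producing the one that works, and the sketch you gesture at (``a positive-density set of long blocks of length $\Theta(n)$ interspersed with increasingly many short blocks'') is not the structure the paper uses and is not obviously reparable: with $O(1)$ long blocks a uniform point falls in a long block with constant probability, but you would then need to control simultaneously the placement of the short blocks at every diffusive scale, which is exactly the unresolved part.

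The paper's construction replaces ``constant on blocks, alternating sign'' by a nested, Cantor-like \emph{density} condition. For each level $k$ one partitions the circle into intervals $I_i^k$ and prescribes the local density of $+1$'s on $I_i^k$ to be $p_i^k\in\{2^{-k},\,1-2^{-k}\}$, with a $\pm 2^{-(k+1)}$ tolerance at the discrete level; the partitions nest so that $\mathcal{F}'_{k+1}\subseteq\mathcal{F}'_k$, and $f_n$ is taken in $\mathcal{F}'_{a_n}$ for some $a_n\to\infty$. This resolves the tension you identified without any constant runs: noise stability comes from the density being within $O(2^{-k})$ of $\{0,1\}$ on each $I_i^k$, so once the walk stays inside one $I_i^k$ over time $\varepsilon$ (probability $\to 1$ as $\varepsilon\to 0$, uniformly in $n$, for fixed $k$), the two evaluated bits disagree with probability $O(2^{-k})$; while volatility, and its robustness under $o(1)$-close perturbation, come from the density on each $I_i^k$ being \emph{strictly} inside $(0,1)$ (and remaining so after an $o(1)$ perturbation), so that traversing any single $I_i^k$ forces a sign change, and the walk traverses some $I_i^k$ before time $\delta$ with probability tending to $1$ as $k\to\infty$ by the Brownian scaling. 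Your steps (2)--(5) are the right tools (tightness, Brownian rescaling, transfer of a crossing from $f_n$ to $g_n$), but without the nested-density construction the key lemma --- that every $I_i^k$ contains both signs, robustly --- has nothing to apply to. So the proposal has a genuine gap precisely at what you yourself call the crux.
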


\begin{proof}[Proof of Proposition~\ref{proposition: analogue on circle}]

We first construct sets of functions \( \{ \mathcal{F}_i \}_{i \in \mathbb{N}} \) as follows. 
\begin{itemize}
\item Let \( I_0^1 = C^1 \). We say that  \( f \colon C^1 \to \{ -1,1 \} \) is in \( \mathcal{F}_1 \) if 
\[
  P\left[f(Y) = 1  \right]  = P\left[f(Y) = 1 \mid Y \in I_0^1 \right] = 2^{-1},
\] where \( Y \sim \unif(C^1) \).  We say that \( \mathcal{F}_1 \) is defined by the interval \( I_0^1 \) with associated density \( 2^{-1} \).

\item Let \( I_0^2 = [0,0.5) \) be the upper half of the unit circle and let \( I_1^2 = [0.5,1) \) be the lower half. We say that \( f \colon C^1 \to \{ -1,1 \} \) is in \( \mathcal{F}_2 \) if
\[
 P\left[f(Y) = 1 \mid Y \in I_0^2 \right] = 2^{-2}
\]
and 
\[
 P\left[f(Y) = 1 \mid Y \in I_1^2 \right] = 1-2^{-2}.
\]
\( \mathcal{F}_2 \) is thus defined by the intervals \( I_0^2 \) and \( I_1^2 \) with associated densities \( 2^{-2} \) and \( 1-2^{-2} \). 
Note that \( I_0^2 \) and \( I_1^2 \) form a partition of \( C^1 \) and that \( \mathcal{F}_2 \subseteq \mathcal{F}_1 \).

\item For \( k \geq 3 \), given \( \mathcal{F}_{k-1} \) we  construct \( \mathcal{F}_{k} \) as follows. For each interval \( I_i^{k-1} \) with associated density \( 2^{-(k-1)} \), pick a centered subinterval \( I_{3i+1}^{k} \subseteq I_i^{k-1}\) with length \(  \frac{2^{-{k}}}{1 - 2^{-(k-1)}} \cdot \left| I_i^{k-1} \right|\) and associated density \( 1 - 2^{-k} \). Let the two remaining intervals be called \( I_{3i}^{k}\) and \( I_{3i+2}^{k}\) and have associated densities  \( 2^{-k} \). Similarly, for each interval \( I_i^{k-1} \) with associated density \( 1-2^{-(k-1)} \), pick a centered interval \( I_{3i+1}^{k}\) with length  \(  \frac{2^{-k}}{1 - 2^{-(k-1)}} \cdot \left| I_i^{k-1} \right|\) and associated density \( 2^{-{k}} \) and let the remaining intervals be called \( I_{3i}^{k}\) and \( I_{3i+2}^{k}\) and have associated densities  \( 1-2^{-k} \). Let \( f \in  \mathcal{F}_{k} \) if for each such interval \(I^{k}_i \) with associated density \( p^{k}_i \) we have that  
\begin{equation}\label{eq: general prob for intervals}
 P\left[f(Y) = 1 \mid X \in I^{k}_i \right] = p^{k}_i.
\end{equation}
Note that with this choice of interval lengths, we have \( \mathcal{F}_{k+1} \subseteq \mathcal{F}_k \).
\end{itemize}

\begin{figure}[htp]
\begin{minipage}[t]{0.42\linewidth}
\centering
        \begin{tikzpicture}
		\draw [black!32,line width=1.5mm,domain=0:360, samples=101] plot ({1.5*cos(\x)}, {1.5*sin(\x)});
	\end{tikzpicture}
\caption*{(a) All functions   \( f \in  \mathcal{F}_1 \) has \( P[f(X_0) = 1 ] = 2^{-1}\). }
\end{minipage}
\hspace{0.5cm}
\begin{minipage}[t]{0.42\linewidth}
\centering
        \begin{tikzpicture}
		\draw [black!16,line width=1.5mm,domain=0:180] plot ({4+1.5*cos(\x)}, {1.5*sin(\x)});
		\draw [black!48,line width=1.5mm,domain=180:360] plot ({4+1.5*cos(\x)}, {1.5*sin(\x)});
	\end{tikzpicture}
\caption*{(b) All functions \(f \in \mathcal{F}_2 \) has density \(  2^{-2} \) on the upper half of the circle, and density \(  1 -2^{-2} \) on the lower half.}
\end{minipage}
\vspace{6ex}

\begin{minipage}[t]{0.42\linewidth}
\centering
        \begin{tikzpicture}
		\draw [black!8,line width=1.5mm,domain=0:75] plot ({1.5*cos(\x)}, {-4+1.5*sin(\x)});
		\draw [black!56,line width=1.5mm,domain=75:105] plot ({1.5*cos(\x)}, {-4+1.5*sin(\x)});
		\draw [black!8,line width=1.5mm,domain=105:180] plot ({1.5*cos(\x)}, {-4+1.5*sin(\x)});
		\draw [black!56,line width=1.5mm,domain=180:255] plot ({1.5*cos(\x)}, {-4+1.5*sin(\x)});
		\draw [black!8,line width=1.5mm,domain=255:285] plot ({1.5*cos(\x)}, {-4+1.5*sin(\x)});
		\draw [black!56,line width=1.5mm,domain=285:360] plot ({1.5*cos(\x)}, {-4+1.5*sin(\x)});
	\end{tikzpicture}
\caption*{(c) All functions \(f \in \mathcal{F}_3 \) has density \( 1 - 2^{-3}  \) in the darker segments of the circle above, and density \( 2^{-3} \) in the lighter segments.}
\end{minipage}
\hspace{0.5cm}
\begin{minipage}[t]{0.42\linewidth}
\centering
        \begin{tikzpicture}
		\draw [black!4,line width=1.5mm,domain=0:35] plot ({4+1.5*cos(\x)}, {-4+1.5*sin(\x)});
		\draw [black!60,line width=1.5mm,domain=35:40] plot ({4+1.5*cos(\x)}, {-4+1.5*sin(\x)});
		\draw [black!4,line width=1.5mm,domain=40:75] plot ({4+1.5*cos(\x)}, {-4+1.5*sin(\x)});
		\draw [black!60,line width=1.5mm,domain=75:89] plot ({4+1.5*cos(\x)}, {-4+1.5*sin(\x)});
		\draw [black!4,line width=1.5mm,domain=89:91] plot ({4+1.5*cos(\x)}, {-4+1.5*sin(\x)});
		\draw [black!60,line width=1.5mm,domain=91:105] plot ({4+1.5*cos(\x)}, {-4+1.5*sin(\x)});
		\draw [black!4,line width=1.5mm,domain=105:140] plot ({4+1.5*cos(\x)}, {-4+1.5*sin(\x)});
		\draw [black!60,line width=1.5mm,domain=140:145] plot ({4+1.5*cos(\x)}, {-4+1.5*sin(\x)});
		\draw [black!4,line width=1.5mm,domain=145:180] plot ({4+1.5*cos(\x)}, {-4+1.5*sin(\x)});

		\draw [black!60,line width=1.5mm,domain=0:-35] plot ({4+1.5*cos(\x)}, {-4+1.5*sin(\x)});
		\draw [black!4,line width=1.5mm,domain=-35:-40] plot ({4+1.5*cos(\x)}, {-4+1.5*sin(\x)});
		\draw [black!60,line width=1.5mm,domain=-40:-75] plot ({4+1.5*cos(\x)}, {-4+1.5*sin(\x)});
		\draw [black!4,line width=1.5mm,domain=-75:-89] plot ({4+1.5*cos(\x)}, {-4+1.5*sin(\x)});
		\draw [black!60,line width=1.5mm,domain=-89:-91] plot ({4+1.5*cos(\x)}, {-4+1.5*sin(\x)});
		\draw [black!4,line width=1.5mm,domain=-91:-105] plot ({4+1.5*cos(\x)}, {-4+1.5*sin(\x)});
		\draw [black!60,line width=1.5mm,domain=-105:-140] plot ({4+1.5*cos(\x)}, {-4+1.5*sin(\x)});
		\draw [black!4,line width=1.5mm,domain=-140:-145] plot ({4+1.5*cos(\x)}, {-4+1.5*sin(\x)});
		\draw [black!60,line width=1.5mm,domain=-145:-180] plot ({4+1.5*cos(\x)}, {-4+1.5*sin(\x)});
	\end{tikzpicture}
\caption*{(d) All functions \(f \in \mathcal{F}_4 \) has has density \( 1 - 2^{-4}  \) in the darker segments of the circle above, and density \( 2^{-4} \) in the lighter segments.}
\end{minipage}
\caption{The four figures above shows the first four steps in the construction of \( \{ \mathcal{F}_i \}_{i \in \mathbb{N}} \).}
\end{figure}

For \( k \in \mathbb{N} \), define \( \mathcal{F}_k' \) by saying  that a function \( f \colon \mathbb{Z}_n \to \{-1,1 \}  \in \mathcal{F}_k' \) if for each interval \( I_i^k \) with associated density \( p_i^k \) we have that 
\begin{equation}
P \left[f(Y_0) = 1 \mid {\textstyle \frac{1}{n}}    Y_0 \in I_i^k \right] \in \left( p_i^k - 2^{-(k+1)}, p_i^k + 2^{-(k+1)} \right).\label{eq: mean on interval}
\end{equation}
Recall that \( p_i^k \in \{ 2^{-k}, 1-2^{-k} \} \), hence the interval in the right hand side of the previous equation is either \( (2^{-(k+1)}, 3\cdot 2^{-(k+1)}) \) or \( (1 - 3 \cdot 2^{-(k+1)}, 1 - 2^{-(k+1)} )\). One can verify that there is at least one \( f \colon \mathbb{Z}_n \to \{ -1,1 \} \in \mathcal{F}_k' \) for \( n \geq 2^{k^2+k}  \).

Now let \((f_n) \), \( f_n \colon \mathbb{Z}_n \to \{ -1,1 \} \),  be a sequence with \( f_n \in \mathcal{F}_{a_n}' \) for some sequence \( a_n \to \infty \). We will show that \( (f_n) \) is nondegenerate w.r.t. \(p_n = 1/2\), noise stable and volatile, but not \( o(1)\)-close to any non-volatile function.

\begin{claim}\label{claim: nondegeneracy} \( (f_n) \) is nondegenerate. \end{claim}

\begin{proof}
To see that \( (f_n) \) is nondegenerate with respect to \( p_n = 0.5 \), simply note that
\begin{equation*}\label{eq: nondeg 1}
\begin{split}
P\left[f_n(Y_0) = 1\right] &= \sum_{i}P\left[{\textstyle \frac{1}{n}}  Y_0  \in I_i^{a_n}\right] \cdot P\left[f_n(Y_0) = 1 \mid \textstyle \frac{1}{n}  Y_0 \in I_i^{a_n}\right]
\\&\geq \sum_{i}P\left[{\textstyle \frac{1}{n} }  Y_0 \in I_i^{a_n}\right] \cdot (p_i^{a_n} - 2^{-(a_n+1)}) 
\\&= \sum_{i}P({\textstyle \frac{1}{n}}  Y_0 \in I_i^{a_n}) \cdot p_i^{a_n} - \sum_{i}P\left[{\textstyle \frac{1}{n} }  Y_0 \in I_i^{a_n}\right] \cdot  2^{-(a_n+1)}
\\&= 0.5 -  2^{-(a_n+1)}
\end{split}
\end{equation*}
and that analogously,
\begin{align*}
P\left[f_n(Y_0) = 1\right] &= \sum_{i}P\left[{\textstyle \frac{1}{n} }  Y_0 \in I_i^{a_n}\right] \cdot P\left[f_n(Y_0) = 1 \mid {\textstyle \frac{1}{n} }  Y_0 \in I_i^{a_n}\right]
\\&\leq \sum_{i}P\left[{\textstyle \frac{1}{n} }  Y_0\in I_i^{a_n}\right] \cdot (p_i^{a_n} + 2^{-(a_n+1)}) 
\\&= \sum_{i}P\left[{\textstyle \frac{1}{n} }  Y_0 \in I_i^{a_n}\right] \cdot p_i^{a_n} + \sum_{i}P\left[{\textstyle \frac{1}{n} }  Y_0 \in I_i^{a_n}\right] \cdot  2^{-(a_n+1)}
\\&= 0.5 +  2^{-(a_n+1)}.
\end{align*}
Consequently, \( (f_n) \) is nondegenerate.
\end{proof}

\begin{claim}\label{claim: noise stability}
\( (f_n) \) is noise stable.
\end{claim}

\begin{proof}
Let \( k \in \mathbb{N} \) be arbitrary and suppose that \( n \) is large enough to have \( a_n > k \). Let \( \ell = \min_i |I_i^k| \). By the Central limit theorem, for all \( \delta > 0 \) we have that
\[
\lim_{\varepsilon \to 0} \limsup_{n \to \infty} P\left[\left| {\textstyle \frac{1}{n}}  Y_0 - {\textstyle \frac{1}{n}}  Y_{\varepsilon} \right| > \delta \ell \right] = 0.
\]
From this it follows that
\[
\lim_{\varepsilon \to 0} \liminf_{n \to \infty}  P \left[\exists i \colon \left\{ {\textstyle \frac{1}{n}}  Y_0, {\textstyle \frac{1}{n}}  Y_{\varepsilon } \right\} \in I_i^k \right] = 1
\]
and consequently, using~\eqref{eq: mean on interval}, we obtain
\[
\lim_{\varepsilon \to 0} \limsup_{n \to \infty}  P \left[ f_n(Y_0) \not = f_n(Y_{\varepsilon}) \right] \leq 3 \cdot 2^{-k}.
\]
 As \( k \) was arbitrary, we can conclude that \( (f_n) \) is noise stable.
\end{proof}

\begin{claim}\label{claim: volatility}
\( (f_n) \) is volatile.
\end{claim}

\begin{proof}

Note first that \( \left| I_i^k \right| \leq 2^{-(k-1)} \) for all \( i \) and \( k \). Also, if \( a_n > k \) we have that \( \mathcal{F'}_{a_n} \subseteq \mathcal{F}'_{k} \). Moreover, as \( p_i^k \pm 2^{-(k+1)} \in (0,1) \) for all \( i \) and \( k \), if all states \( x \in \mathbb{Z} \) such that \( {\textstyle \frac{1}{n}} \cdot x \in I_i^k \) has been visited for some \( i \) and \( k \), then at least one state with \( f_n(x) = 1 \) and at least one state with \( f_n(x) = -1 \) must have been visited.
Consequently, if \( a_n > k \) we have that
\[
	P\left[\tau_{\partial f_n} > \delta \right] 
	\leq  P \left[ \left| {\textstyle \frac{1}{n}}  Y_0 - {\textstyle \frac{1}{n}}  Y_{\delta } \right| < 2 \cdot 2^{-(k-1)} \right].
\]
and as \( a_n \to \infty \) by assumption, it follows that
\[
	\limsup_{n \to \infty} P\left[\tau_{\partial f_n} > \delta \right] 
	\leq  \limsup_{n \to \infty} P\left[ \left| {\textstyle \frac{1}{n}}  Y_0 - {\textstyle \frac{1}{n}}  Y_{\delta } \right|< 2\cdot 2^{-(k-1)} \right]
\]
for all \( k \in \mathbb{N} \). By the Central limit theorem, the right hand side of the last equation can be made arbitrarily small by choosing \( k \) to be large, and hence \( (f_n) \) must be volatile.
\end{proof}

\begin{claim}
\( (f_n) \) is not \( o(1) \)-close to any nonvolatile sequence \( f_n' \colon \mathbb{Z}_n \to \{ -1,1 \} \).
\end{claim}

\begin{proof}
To that the claim is true, let \( k \) be arbitrary, suppose that  \( (f_n) \) and \( (f_n') \) are \( o(1)\)-close and pick \( n \) large enough such that 
\[
 P\left(f'_n(Y_0) \not = f_n(Y_0)) \mid {\textstyle \frac{1}{n}} \cdot Y_0 \in I \right) < 2^{-(k+2)}  
\]
for all intervals \( I \subset C^1 \) of length \( \ell = \min_i I_i^k \). Using~\eqref{eq: mean on interval}, it follows that  
\[
P\left[f_n'(Y_0) = 1 \mid {\textstyle \frac{1}{n}} \cdot  Y_0 \in I_i^k\right] \in \left(p_i^k - 2^{-(k+1)} - 2^{-(k+2)}, p_i^k + 2^{-(k+1)} + 2^{-(k+2)}\right)
\]
and hence, as \( p_i^k \in \{ 2^{-k}, 1-2^{-k} \} \), we have that
\[
P\left[f_n'(Y_0) = 1 \mid {\textstyle \frac{1}{n}} \cdot  Y_0 \in I_i^k\right] \in \left(2^{-(k+2)}, 1-2^{-(k+2)}\right).
\]
Repeating the argument we used to show that \( (f_n) \) was volatile, it follows that \( (f'_n) \) must also be volatile.
\end{proof}

Putting the claims together, Proposition~\ref{proposition: analogue on circle} follows.
\end{proof}

We will now continue to the proof of Theorem~\ref{theorem: noise stable and volatile}. Before we do this however, we will state and prove the following lemma.

\begin{lemma}\label{lemma: comparison}
Let \(p_n = 0.5 \), \( Y_0 \sim \unif(\mathbb{Z}_{2n})\) and \( X_0 \sim \unif(\{-1,1\}^{4n^2})  \). Further, let \( \varphi \) and \( \Phi \) be the probability density function and the cumulative density function of a standard normal distribution. Then for all large enough \( n \), and any function  \( h_{2n} \colon \mathbb{Z}_{2n} \to \{ -1 ,1 \} \), we have that
\begin{equation}
\begin{split}
&2(1 - \Phi(2)) \cdot P\left[h_{2n}(Y_0) = 1\right]
\\&\qquad \leq P\left[h_{2n}(\|X_0\| \mod 2n) = 1\right]
\\&\qquad \leq  (1 + 4\varphi(0)) \cdot  P\left[h_{2n}(Y_0) = 1\right] .
\end{split}
\end{equation}
\end{lemma}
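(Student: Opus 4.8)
The plan is to compare the two distributions $P[h_{2n}(Y_0)=1]$ and $P[h_{2n}(\|X_0\|\bmod 2n)=1]$ by expanding both over the $2n$ residue classes modulo $2n$ and controlling, term by term, the ratio of the probability masses. Write $q_j \coloneqq P[Y_0 = j] = \frac{1}{2n}$ for $j \in \mathbb{Z}_{2n}$ (uniform), and $r_j \coloneqq P[\|X_0\| \bmod 2n = j] = \sum_{m \equiv j \,(\mathrm{mod}\, 2n)} \binom{4n^2}{m} 2^{-4n^2}$. Then for any $h_{2n}$, writing $A = \{ j \colon h_{2n}(j) = 1\}$, we have $P[h_{2n}(Y_0)=1] = \sum_{j \in A} q_j$ and $P[h_{2n}(\|X_0\| \bmod 2n)=1] = \sum_{j \in A} r_j$, so it suffices to prove the pointwise sandwich
\[
2\bigl(1-\Phi(2)\bigr) \, q_j \;\le\; r_j \;\le\; \bigl(1 + 4\varphi(0)\bigr)\, q_j
\]
for every $j$ and all large $n$, since summing over $j \in A$ then gives the claim regardless of which set $A$ is. This reduces the lemma to a concrete estimate on binomial coefficients summed along an arithmetic progression with common difference $2n$.

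Next I would handle the binomial sum $r_j$ via the local central limit theorem. The random variable $\|X_0\|$ is $\mathrm{Bin}(4n^2, 1/2)$, with mean $\mu = 2n^2$ and standard deviation $\sigma = \sqrt{4n^2/4} = n$. The local CLT gives, uniformly over $m$,
\[
\binom{4n^2}{m} 2^{-4n^2} = \frac{1}{n}\,\varphi\!\left(\frac{m - 2n^2}{n}\right) + o(1/n),
\]
so $r_j$ is a Riemann-type sum: $r_j = \sum_{i} \frac{1}{n}\varphi\!\left(\frac{(j + 2ni) - 2n^2}{n}\right) + (\text{error})$, where $i$ ranges so that $0 \le j + 2ni \le 4n^2$, i.e. roughly $i \in \{-n, \ldots, n\}$. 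Setting $x_i = \frac{j+2ni - 2n^2}{n} = 2i - 2n + \frac{j}{n}$, consecutive $x_i$ differ by $2$, and $\frac{2}{2n} = \frac{1}{n}$, so $r_j \approx \frac{1}{2n}\sum_i 2\,\varphi(x_i)$, a Riemann sum for $\frac{1}{2n}\int_{-\infty}^{\infty}\varphi = \frac{1}{2n} = q_j$. This already suggests $r_j/q_j \to 1$; the lemma only asks for crude one-sided bounds, which gives room to absorb the sampling error in the Riemann sum and the local-CLT error.

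For the upper bound, I would bound each term $\frac{1}{n}\varphi(x_i) \le \frac{1}{n}\varphi(0)$ crudely only for the few indices near the center and use monotonicity of $\varphi$ elsewhere to compare the sum to an integral plus one extra "overshoot" term: $\sum_i \varphi(x_i) \le \int_{\mathbb{R}}\varphi\,\frac{dx}{2} \cdot (\text{step correction}) + \varphi(0) \le n + 2\varphi(0)$ after multiplying through, yielding $r_j \le \frac{1}{2n}(2 + 4\varphi(0)) = (1 + 4\varphi(0)) q_j$ once the local-CLT error is folded in for large $n$. For the lower bound, I would restrict the sum defining $r_j$ to the indices $i$ with $|x_i| \le 2$ (a window of length $4$ around the center, containing at least, say, one full step so at least one term), bound $\varphi(x_i) \ge \varphi(2)$ there, but more cleanly compare to $\int_{-2}^{2}\varphi = 2\Phi(2) - 1 = 2(1 - \Phi(2))$... wait, $\int_{-2}^2 \varphi = \Phi(2) - \Phi(-2) = 2\Phi(2) - 1$; I would instead use the tail complement so that the stated constant $2(1-\Phi(2))$ appears as $2\bigl(1 - (1-(\Phi(2)-1/2))\bigr)$ — concretely, bound $r_j$ below by a Riemann under-sum of $\varphi$ over a symmetric interval and choose the interval so the resulting integral is exactly $2(1-\Phi(2))$, again valid for large $n$.

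The main obstacle I expect is \emph{not} the analytic heart (local CLT + Riemann-sum comparison is routine) but rather making the Riemann-sum error bounds uniform in the residue $j \in \{0, 1, \ldots, 2n-1\}$ simultaneously with the local-CLT remainder, and handling the boundary indices $i \approx \pm n$ where the binomial coefficients are tiny but the naive Gaussian approximation is poorest — there one should simply note those terms are super-polynomially small and contribute nothing to either bound. A secondary bookkeeping point is the off-by-one in how many indices $i$ fall in a given window as $j$ varies, which is exactly why the stated bounds carry slack constants ($1 + 4\varphi(0)$ rather than $1$, and $2(1-\Phi(2))$ rather than $1$) instead of being sharp; once one accepts that slack, each inequality follows by a one-line comparison of a monotone-rearranged sum to an integral.
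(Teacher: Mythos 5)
Your strategy is sound and genuinely different from the paper's. You reduce the lemma to a \emph{pointwise} comparison of the two pushforward measures on $\mathbb{Z}_{2n}$: with $q_j = \tfrac{1}{2n}$ and $r_j = P[\|X_0\| \bmod 2n = j]$, you show $2(1-\Phi(2))q_j \le r_j \le (1+4\varphi(0))q_j$ uniformly in $j$, which immediately gives the claim for every $h_{2n}$ by summing over $\{j : h_{2n}(j)=1\}$. The paper instead keeps $h_{2n}$ in play and argues that the worst-case placements of the set $\{h_{2n}=1\}$ are intervals of residues adjacent to the mode $2n^2 \equiv 0$; it then evaluates those extremal probabilities with the ordinary (integral) central limit theorem on windows of length $\rho\cdot 2n$, getting sums of increments $\Phi(2k\pm 2\rho)-\Phi(2k)$ that it bounds by $2\rho\,\varphi(2k)$ via monotonicity of $\varphi$. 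Your route buys a cleaner logical structure (the dependence on $h_{2n}$ disappears at the outset, and you in fact prove the stronger statement that the two laws are uniformly comparable), at the price of invoking the local CLT rather than the CLT and of having the constants emerge from numerical checks (e.g.\ $\varphi(0)+\tfrac12 \le \tfrac12(1+4\varphi(0))$ for the upper bound, and $2\varphi(2) \ge 1-\Phi(2)$ for the lower bound using the two lattice points in $[-2,2]$) rather than appearing organically as in the paper.

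Two points need tightening before this is a proof. First, the error control in the local CLT cannot be ``$o(1/n)$ per term summed over the residue class'': there are about $2n$ terms $m\equiv j$, so a uniform additive $o(1/n)$ error accumulates to $o(1)$, which swamps $r_j \asymp 1/n$. You must instead use a local limit theorem with multiplicative error in the bulk $|m-2n^2|\le n\log n$ (where only $O(\log n)$ terms per residue class live, and only $O(1)$ of them are non-negligible) together with a super-polynomially small tail bound outside; you gesture at this but should state it as the actual mechanism. Second, the passage where you compute the Riemann-sum constants is garbled (the ``$\le n + 2\varphi(0)$'' line, and the back-and-forth about $\int_{-2}^{2}\varphi$ versus $2(1-\Phi(2))$); the correct statements are $\sum_k\varphi(2k+\theta) \le \varphi(0) + \tfrac12\int\varphi\cdot 1 = \varphi(0)+\tfrac12$ for the upper bound and $\sum_k\varphi(2k+\theta) \ge 2\varphi(2) \ge 1-\Phi(2)$ for the lower bound, both uniform in the offset $\theta$. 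With those repairs the argument goes through.
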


\begin{proof}

To simplify notation, define \( \rho \) by
\[
\rho \coloneqq  P\left[h_{2n}(Y_0) = 1\right] .
\]
Then 
\begin{align*}
&   P\left[h_{2n}(\|X_0\| \mod 2n) = 1 \right] 
\\&\qquad  \leq  P\left[ \|X_0\| \mod 2n \in (0,\rho \cdot 2n) \mid \left\| X_0 \right\| \geq 2n^2 \right]
\end{align*}
and
\begin{align*}
&   P\left[ h_{2n}(\|X_0\| \mod 2n) = 1\right]
\\&\qquad \geq P\left[\|X_0\| \mod 2n \in ((1-\rho) \cdot 2n,2n) \mid \| X_0 \| \geq 2n^2 \right] 
\end{align*}
and hence it suffices to give upper and lower bounds for the  last expressions in the inequalities above respectively. 

For the lower of these bounds, we  have that
\begin{equation*}
\begin{split}
&P\left[ \|X_0\| \mod 2n \in ((1-\rho) \cdot 2n,2n) \mid \| X_0 \| \geq 2n^2 \right]
\\&\qquad =
2\sum_{k = 1}^\infty 
P\left[ \|X_0\|   \in (2n^2 + k \cdot 2n -  \rho \cdot 2n ,2n^2 +  k\cdot 2n )   \right]
\\&\qquad \to
2\sum_{k = 1}^\infty 
 \left(  \Phi(2k - 2\rho) - \Phi(2k)  \right)
 >
2\sum_{k = 1}^\infty 
  2 \rho \cdot \varphi(2k) 
\\&\qquad =
\rho \cdot 4\sum_{k = 1}^\infty 
   \varphi(2k) 
 >
\rho  \cdot 2(1 - \Phi(2)).
\end{split}
\end{equation*}

Similarly, for the upper bound we have that
\begin{equation*}
\begin{split}
& P\left[ \|X_0\| \mod 2n \in (0,\rho \cdot 2n) \mid \| X_0 \| \geq 2n^2 \right]
\\&\qquad =
2\sum_{k = 0}^\infty 
P\left[ \|X_0\|   \in (2n^2 + k \cdot 2n  ,2n^2 +  k\cdot 2n + \rho \cdot 2n )   \right]  
\\&\qquad \to
2\sum_{k = 0}^\infty 
 \left(  \Phi(2k + 2\rho) - \Phi(2k) \right)
 <
\rho \cdot 2 \sum_{k = 0}^\infty 
    2 \rho \cdot \varphi(2k  )  
\\&\qquad =
\rho \cdot 4\sum_{k = 0}^\infty 
   \varphi(2k) 
 <
\rho \cdot (1 + 4\varphi(0)).
\end{split}
\end{equation*}

From this the desired conclusion follows.

\end{proof}

We will now give a proof of Theorem~\ref{theorem: noise stable and volatile}, which will more or less be a direct translation of the previous proof to our original setting on the hypercube.

\begin{proof}[Proof of Theorem~\ref{theorem: noise stable and volatile}].
With \( (f_n) \) as in the proof of Proposition~\ref{proposition: analogue on circle}, define \( g_n \colon \{ -1,1 \}^{4n^2} \to \{ -1,1 \} \) for \( x \in \{ -1,1 \}^{4n^2} \) by
\[
g_n(x) \coloneqq f_{2n}\left(\| x \|  \mod 2n\right).
\]
The main idea of this proof is that as with the scaling above, a random walk on the level sets of \(  \{ -1,1 \}^{4n^2} \) and a simple random walk on \( \mathbb{Z}_{2n} \) move around about as much, hence  all the properties we verified for \( (f_n) \) in the proof of Proposition~\ref{proposition: analogue on circle} should transfer to \( (g_n) \). 

We now do this formally. 

\begin{claim}
\( (g_n) \) is nondegenerate.
\end{claim}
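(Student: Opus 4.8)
The plan is to show that $(g_n)$ is nondegenerate with respect to $p_n = 0.5$ by combining Lemma~\ref{lemma: comparison} with the nondegeneracy of $(f_n)$ established in Claim~\ref{claim: nondegeneracy}. Recall that $g_n(x) = f_{2n}(\|x\| \bmod 2n)$ where $X_0 \sim \unif(\{-1,1\}^{4n^2})$, so that $P[g_n(X_0) = 1] = P[f_{2n}(\|X_0\| \bmod 2n) = 1]$. Applying Lemma~\ref{lemma: comparison} with $h_{2n} = f_{2n}$ gives, for all large enough $n$,
\[
2(1 - \Phi(2)) \cdot P[f_{2n}(Y_0) = 1] \leq P[g_n(X_0) = 1] \leq (1 + 4\varphi(0)) \cdot P[f_{2n}(Y_0) = 1].
\]

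First I would invoke Claim~\ref{claim: nondegeneracy}, which shows $P[f_{2n}(Y_0) = 1] \to 1/2$ (indeed $P[f_{2n}(Y_0)=1] \in (0.5 - 2^{-(a_{2n}+1)}, 0.5 + 2^{-(a_{2n}+1)})$ with $a_{2n} \to \infty$). Plugging this into the displayed inequality, I obtain
\[
\liminf_{n\to\infty} P[g_n(X_0) = 1] \geq (1 - \Phi(2)) > 0
\]
and
\[
\limsup_{n\to\infty} P[g_n(X_0) = 1] \leq \tfrac{1}{2}(1 + 4\varphi(0)) < 1,
\]
where the last strict inequality uses $\varphi(0) = 1/\sqrt{2\pi} < 1/8$, so $\tfrac{1}{2}(1+4\varphi(0)) < \tfrac{1}{2}(1 + 1/2) = 3/4 < 1$. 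Since $\E[g_n(X_0)] = 2P[g_n(X_0) = 1] - 1$, this gives $-1 < \liminf \E[g_n(X_0)] \leq \limsup \E[g_n(X_0)] < 1$, which is precisely nondegeneracy.

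There is no real obstacle here: this claim is a routine consequence of Lemma~\ref{lemma: comparison} (which does the genuine analytic work of comparing the level-set random walk on the hypercube with the walk on $\mathbb{Z}_{2n}$ via the central limit theorem) together with the already-established Claim~\ref{claim: nondegeneracy}. The only point requiring a moment's care is checking that the numerical constants $2(1-\Phi(2))$ and $1 + 4\varphi(0)$ are genuinely bounded away from $0$ and $2$ respectively, so that after multiplying by the limiting density $1/2$ one stays strictly inside $(0,1)$; this is immediate from $\Phi(2) < 1$ and $\varphi(0) = (2\pi)^{-1/2} < 1/4$.
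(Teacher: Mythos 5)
Your overall strategy — combine Lemma~\ref{lemma: comparison} with Claim~\ref{claim: nondegeneracy} — is exactly the paper's (the paper gives no further detail), and your lower bound is fine: $\liminf_n P[g_n(X_0)=1] \geq (1-\Phi(2)) > 0$. But your upper bound contains a genuine numerical error. You assert $\varphi(0) = (2\pi)^{-1/2} < 1/8$ (and later $<1/4$); in fact $\varphi(0) = 1/\sqrt{2\pi} \approx 0.399$. Consequently
\[
\tfrac{1}{2}\bigl(1 + 4\varphi(0)\bigr) \approx \tfrac{1}{2}\cdot 2.596 \approx 1.30 > 1,
\]
so the inequality $\limsup_n P[g_n(X_0)=1] \leq \tfrac{1}{2}(1+4\varphi(0)) < 1$ is false as stated: the upper bound of Lemma~\ref{lemma: comparison}, applied directly to $h_{2n}=f_{2n}$ with limiting density $1/2$, exceeds $1$ and therefore gives no information about nondegeneracy from above.

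The repair is to use the \emph{lower} bound of Lemma~\ref{lemma: comparison} applied to $h_{2n} = -f_{2n}$ (the same symmetrization the paper uses later, ``we apply this lemma to either $f$ or $-f$''). This gives
\[
P[g_n(X_0) = -1] = P\bigl[(-f_{2n})(\|X_0\|\bmod 2n) = 1\bigr] \geq 2(1-\Phi(2))\, P[f_{2n}(Y_0) = -1] \longrightarrow (1-\Phi(2)) > 0,
\]
hence $\limsup_n P[g_n(X_0)=1] \leq \Phi(2) < 1$, and nondegeneracy follows. With this substitution your argument is complete.
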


\begin{proof}
By combining Lemma~\ref{lemma: comparison} with Claim~\ref{claim: nondegeneracy} it follows directly that \( (g_n) \) is nondegenerate. 
\end{proof}

Now recall that by Lemma~\ref{lemma: OU process},
\begin{equation}
\left(\frac{ \| X_t^{(n)} \| - 2n^2}{\sqrt{2}n} \right)_t \Rightarrow \left( Z_t \right)_t
\end{equation}
where \(\left(  Z_t \right)_t \) is an Ornstein Uhlenbeck process.
Note also that by the same lemma, for all \( \varepsilon > 0 \) we have that
\begin{equation}\label{eq: mean for OU}
\E [ Z_\varepsilon \mid Z_0 = z] = ze^{-\varepsilon}
\end{equation}
and that
\begin{equation}\label{eq: var for OU}
\Var [Z_\varepsilon \mid Z_0 = z]= \frac{1-e^{-2\varepsilon}}{2}.
\end{equation}

\begin{claim}
\( (g_n) \) is noise stable.
\end{claim}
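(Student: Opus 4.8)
The plan is to transfer the noise stability of $(f_n)$ on $\mathbb{Z}_{2n}$ (Claim~\ref{claim: noise stability}) to $(g_n)$ on $\{-1,1\}^{4n^2}$ by comparing the motion of $\|X_t\| \bmod 2n$ with that of a simple random walk on $\mathbb{Z}_{2n}$. Fix $k \in \mathbb{N}$ and take $n$ large enough that $a_{2n} > k$. As in the proof of Claim~\ref{claim: noise stability}, the key point is that $g_n(X_0) = g_n(X_\varepsilon)$ unless $\tfrac1{2n}(\|X_0\| \bmod 2n)$ and $\tfrac1{2n}(\|X_\varepsilon\| \bmod 2n)$ fail to lie in a common interval $I_i^k$, and by~\eqref{eq: mean on interval} this failure contributes at most $3 \cdot 2^{-k}$. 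So it suffices to show
\[
\lim_{\varepsilon \to 0} \limsup_{n \to \infty} P\left[\left| \tfrac1{2n}(\|X_\varepsilon\| \bmod 2n) - \tfrac1{2n}(\|X_0\| \bmod 2n) \right| > \delta \ell \right] = 0
\]
for every $\delta > 0$, where $\ell = \min_i |I_i^k|$; the conclusion $\lim_{\varepsilon\to 0}\limsup_{n\to\infty} P[g_n(X_0) \neq g_n(X_\varepsilon)] \le 3\cdot 2^{-k}$ then follows, and since $k$ is arbitrary, $(g_n)$ is noise stable (using the finiteness of the state space to pass to the $\sup_n$ form).

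To control that displacement, first note it is at most $\tfrac1{2n}\bigl|\|X_\varepsilon\| - \|X_0\|\bigr|$ (the mod-$2n$ reduction only decreases distances on the circle). Then I would use Lemma~\ref{lemma: OU process} in the normalized form~\eqref{eq: var for OU}: $(\|X_t\| - 2n^2)/(\sqrt2 n)$ converges in distribution to the Ornstein--Uhlenbeck process $(Z_t)$, so
\[
\frac{\|X_\varepsilon\| - \|X_0\|}{\sqrt2\, n} = \frac{\|X_\varepsilon\| - 2n^2}{\sqrt2\,n} - \frac{\|X_0\| - 2n^2}{\sqrt2\,n} \;\Rightarrow\; Z_\varepsilon - Z_0,
\]
and by~\eqref{eq: mean for OU} and~\eqref{eq: var for OU}, $Z_\varepsilon - Z_0$ (started from stationarity) has mean $0$ and variance $(1-e^{-\varepsilon})$, which tends to $0$ as $\varepsilon \to 0$. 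Hence $\tfrac1{2n}\bigl|\|X_\varepsilon\|-\|X_0\|\bigr| = \tfrac{\sqrt2}{2}\cdot\bigl|(\|X_\varepsilon\|-\|X_0\|)/(\sqrt2 n)\bigr|$ converges in distribution to $\tfrac{\sqrt2}{2}|Z_\varepsilon - Z_0|$, a random variable that concentrates at $0$ as $\varepsilon \to 0$; a Markov/Chebyshev bound gives $\limsup_{n\to\infty} P[\,\cdot > \delta\ell\,] \le (1-e^{-\varepsilon})/(2\delta^2\ell^2) \to 0$.

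The main obstacle is the interplay between the mod-$2n$ wrap-around and the interval structure $\{I_i^k\}$: one must make sure that the ``common interval'' argument of Claim~\ref{claim: noise stability} genuinely applies to $g_n$, i.e.\ that the relevant conditional probabilities $P[g_n(X_0) = 1 \mid \tfrac1{2n}(\|X_0\|\bmod 2n) \in I_i^k]$ are themselves within $2^{-(k+1)}$ of $p_i^k$ --- this requires invoking Lemma~\ref{lemma: comparison} (or rather the conditional version of the computation behind it) to see that conditioning $\|X_0\| \bmod 2n$ on an interval behaves, up to asymptotically negligible error, like conditioning a uniform point on $\mathbb{Z}_{2n}$, so that membership $f_{2n} \in \mathcal{F}_{a_{2n}}'$ transfers. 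Once that bookkeeping is in place, the displacement estimate above closes the argument exactly as in Claim~\ref{claim: noise stability}. A minor subtlety is that $\ell$ depends on $k$ but not on $n$ or $\varepsilon$, so the order of limits ($\varepsilon \to 0$ after $n \to \infty$) is legitimate; and the convergence in Lemma~\ref{lemma: OU process} is for fixed $t$, which is all we need since $\varepsilon$ is fixed before $n \to \infty$.
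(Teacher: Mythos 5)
Your proposal is correct and follows essentially the same route as the paper: control the displacement of $\|X_t\|/(2n)$ via the Ornstein--Uhlenbeck limit in Lemma~\ref{lemma: OU process} together with a Chebyshev bound, then use Lemma~\ref{lemma: comparison} to transfer the conditional density bounds from \eqref{eq: mean on interval} to $\|X_0\|\bmod 2n$ (yielding the wider but still $O(2^{-k})$ interval of \eqref{eq: new mean on interval}), and let $k\to\infty$. The only cosmetic slip is writing the naive difference $\bigl|\tfrac1{2n}(\|X_\varepsilon\|\bmod 2n)-\tfrac1{2n}(\|X_0\|\bmod 2n)\bigr|$ where the circular distance is meant, and quoting the raw bound $3\cdot 2^{-k}$ from \eqref{eq: mean on interval} before acknowledging the Lemma~\ref{lemma: comparison} correction; neither affects the argument since $k$ is arbitrary.
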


\begin{proof}
With the notation of Claim~\ref{claim: noise stability}, for all \( \delta \) and \( \ell \)  we have that
\begin{equation*}
\begin{split}
&\lim_{\varepsilon \to 0} \limsup_{n \to \infty} P\left[\left| {\textstyle \frac{1}{2n}}   \| X_\varepsilon \| - {\textstyle \frac{1}{2n}}  \| X_0\| \right| \mod 1 > \delta \ell \right]
\\&\qquad=
\lim_{\varepsilon \to 0}  P\left[\left|   Z_\varepsilon - Z_0  \right| \mod \sqrt{2} > \sqrt{2} \delta \ell \right]
\\&\qquad\leq
\lim_{\varepsilon \to 0}  P\left[\left|   Z_\varepsilon - Z_0 \right| > \sqrt{2} \delta \ell \right] 
\\&\qquad\leq 
\lim_{\varepsilon \to 0} \frac{  \E \left[ (  Z_\varepsilon - Z_0 )^2   \right] }{2\delta^2 \ell^2}
\\&\qquad= 
\lim_{\varepsilon \to 0} \frac{  
(1-e^{-2\varepsilon})/2   +  (e^{-\varepsilon} - 1 )^2     }{2\delta^2 \ell^2}
\\&\qquad= 0
\end{split}
\end{equation*}
where the second inequality follows by applying Markov's inequality and the next to last equality follows by using~\eqref{eq: mean for OU}~and~\eqref{eq: var for OU}.

From this it follows that for all \( k \),
\begin{equation}\label{eq: tuple containment}
\lim_{\varepsilon \to 0} \liminf_{n \to \infty}  P \left(\exists i \colon \left\{ {\textstyle \frac{1}{2n}}  (\| X_0\| \mod 2n) , {\textstyle \frac{1}{2n}}  (\| X_{\varepsilon }\| \mod 2n) \right\} \in I_i^k \right) = 1.
\end{equation}
Combining~\eqref{eq: mean on interval} and Lemma~\ref{lemma: comparison} (for each \( i \) and \( k \) we apply this lemma to either \( f \) or \( -f \) depending on \( p_i^k \)), we get that
\begin{equation}\label{eq: new mean on interval}
\begin{split}
&P \left[f(\| X_0\| \mod 2n) = 1 \mid {\textstyle \frac{1}{2n}}  (\| X_0\| \mod 2n) \in I_i^k \right]
\\&\qquad \in  \left( 2(1 - \Phi(2)) \cdot  2^{-(k+1)}, (1 + 4\varphi(0)) \cdot 3\cdot  2^{-(k+1)} \right) 
\\& \qquad \qquad\cup  \left( 1-(1 + 4\varphi(0)) \cdot 3\cdot 2^{-(k+1)}, 1 - 2(1 - \Phi(2)) \cdot  2^{-(k+1)} \right).
\end{split}
\end{equation}
and consequently, using~\eqref{eq: tuple containment}, we obtain
\[
\lim_{\varepsilon \to 0} \limsup_{n \to \infty}  P \left[f_n(Y_0) \not = f_n(Y_{\varepsilon}) \right] \leq 2 \cdot   (1 + 4\varphi(0)) \cdot 3\cdot  2^{-(k+1)}
\]
 As \( k \) was arbitrary, we can conclude that \( (f_n) \) is noise stable.

\end{proof}

\begin{claim}
\( (g_n) \) is volatile.
\end{claim}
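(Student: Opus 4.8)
The plan is to repeat the argument used for Claim~\ref{claim: volatility}, now on the hypercube, by exploiting that the level-set process \( t\mapsto\|X_t\| \) changes only by \( \pm1 \) at a time, so it sweeps through a contiguous block of integers, and then pushing this forward to \( \mathbb{Z}_{2n} \) and invoking the Ornstein--Uhlenbeck limit of Lemma~\ref{lemma: OU process}.

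Concretely, fix \( \delta>0 \) and \( k\in\mathbb{N} \), and take \( n \) large enough that \( a_{2n}>k \), so that \( f_{2n}\in\mathcal F'_{a_{2n}}\subseteq\mathcal F'_k \). On the event \( \{\tau_{\partial g_n}>\delta\} \) we have \( f_{2n}(\|X_t\|\bmod 2n)=f_{2n}(\|X_0\|\bmod 2n) \) for all \( t\in[0,\delta] \). Since \( \|X_t\| \) moves by unit steps, the set \( \{\|X_t\|:t\in[0,\delta]\} \) is a contiguous block of integers \( \{m_-,\dots,m_+\} \), whose image \( A \) in \( \mathbb{Z}_{2n} \) is therefore an arc of \( C^1 \) of length \( (m_+-m_-)/(2n) \) (and equals all of \( \mathbb{Z}_{2n} \) if \( m_+-m_-\ge 2n \)). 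By~\eqref{eq: mean on interval}, for every \( i \) the set \( \{y:y/(2n)\in I_i^k\} \) has conditional \( f_{2n} \)-mean strictly inside \( (0,1) \), hence contains points with \( f_{2n}=1 \) and with \( f_{2n}=-1 \); so \( A \) cannot contain any full such interval. As the intervals \( I_i^k \) partition \( C^1 \) and satisfy \( |I_i^k|\le 2^{-(k-1)} \), any arc containing no full \( I_i^k \) has length \( <2\cdot 2^{-(k-1)} \); in particular \( m_+-m_-<2n \), and the circular distance between \( \tfrac1{2n}\|X_0\| \) and \( \tfrac1{2n}\|X_\delta\| \) is at most this length. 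Hence
\[
P\!\left[\tau_{\partial g_n}>\delta\right]\le P\!\left[\mathrm{dist}_{C^1}\!\left(\tfrac1{2n}\|X_0\|,\ \tfrac1{2n}\|X_\delta\|\right)<2\cdot 2^{-(k-1)}\right].
\]
It then remains to let \( n\to\infty \) and afterwards \( k\to\infty \). By Lemma~\ref{lemma: OU process} (with \( 4n^2 \) coordinates and \( p=1/2 \)), the pair \( \big(\tfrac{\|X_0\|-2n^2}{\sqrt 2\,n},\tfrac{\|X_\delta\|-2n^2}{\sqrt 2\,n}\big) \) converges in distribution to \( (Z_0,Z_\delta) \), so \( W_n:=\tfrac{\|X_\delta\|-\|X_0\|}{2n}\Rightarrow W:=\tfrac1{\sqrt2}(Z_\delta-Z_0) \); by~\eqref{eq: mean for OU} and~\eqref{eq: var for OU} the variable \( W \) is centred Gaussian with variance \( \tfrac12(1-e^{-\delta})>0 \), so \( P[W\in\mathbb{Z}]=0 \). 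Since \( x\mapsto\mathrm{dist}_{C^1}(x\bmod1,0) \) is bounded and continuous, weak convergence and the portmanteau theorem give \( \limsup_{n\to\infty}P[\mathrm{dist}_{C^1}(W_n\bmod1,0)<c]\le P[\mathrm{dist}_{C^1}(W\bmod1,0)\le c] \) for every \( c>0 \). Taking \( c=2\cdot2^{-(k-1)} \) and letting \( k\to\infty \), the right-hand side decreases to \( P[W\in\mathbb{Z}]=0 \), whence \( \lim_{n\to\infty}P[\tau_{\partial g_n}>\delta]=0 \). Since \( \delta \) was arbitrary, \( (g_n) \) is volatile.

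I expect the only point requiring genuine care to be the combinatorial reduction in the first paragraph: verifying that the level-set walk really visits an interval of \( \mathbb{Z}_{2n} \) (so the mod-\( 2n \) wraparound is harmless once \( n \) is large relative to the scale \( 2^{-(k-1)} \)), and that failing to cover any block \( I_i^k \) forces the visited arc to be short. Once this is set up, the remainder is a routine transcription of the proof of Claim~\ref{claim: volatility}, with the circle random walk's central limit theorem replaced by the Ornstein--Uhlenbeck convergence of Lemma~\ref{lemma: OU process}.
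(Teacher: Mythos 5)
Your proposal is correct and takes essentially the same route as the paper: reduce $\{\tau_{\partial g_n}>\delta\}$ to the event that the (circularized, rescaled) level-set displacement is smaller than $2\cdot 2^{-(k-1)}$ — using that each $I_i^k$ carries both values of $f_{2n}$ by~\eqref{eq: mean on interval} — then pass to the Ornstein--Uhlenbeck limit via Lemma~\ref{lemma: OU process} and let $k\to\infty$. The paper compresses the combinatorial reduction into ``the same argument as in the proof of Claim~\ref{claim: volatility}''; you have simply spelled out those details (contiguity of the visited levels, the two-interval bound on an arc containing no full $I_i^k$, and the portmanteau step), all of which check out.
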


\begin{proof}
Using the same argument as in the proof of Claim~\ref{claim: volatility}, for any \( k \in \mathbb{N} \) we have that
\begin{equation*}
\begin{split}
&\limsup_{n \to \infty} P \left[ \tau_{\partial g_n} > \delta \right]
\\&\qquad\leq \limsup_{n \to \infty} P\left[\left| {\textstyle \frac{1}{2n}}  \| X_0\|  -  {\textstyle \frac{1}{2n}}   \| X_\delta\|  \right| \mod 1 < 2 \cdot 2^{-k} \right]
\\
&\qquad=P\left[\left| Z_0 -  Z_\delta \right| \mod \sqrt{2} < 2 \sqrt{2} \cdot 2^{-k} \right].
\end{split}
\end{equation*}
As \( k \) was arbitrary it follows that \( (g_n) \) is volatile.
\end{proof}

\begin{claim}
\( (g_n) \) is not \( o(1) \)-close to any nonvolatile sequence \( g_n' \colon \mathbb{Z}_{2n} \to \{ -1,1 \} \).
\end{claim}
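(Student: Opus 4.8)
The plan is to mimic, line for line, the argument used for the last claim in Proposition~\ref{proposition: analogue on circle}, replacing the one ingredient that does not carry over verbatim — that the conditional law of the function on each interval $I_i^k$ stays bounded away from $0$ and $1$ — by an appeal to Lemma~\ref{lemma: comparison}. So suppose for contradiction that $(g_n')$ is nonvolatile and $o(1)$-close to $(g_n)$, fix $k\in\mathbb N$ (to be sent to infinity at the end) and put $\ell_k:=\min_i|I_i^k|>0$. For each interval $I_i^k$ let $E_i^{(n)}:=\{x\in\{-1,1\}^{4n^2}\colon \tfrac1{2n}(\|x\|\mod 2n)\in I_i^k\}$, and let $h_{2n}^{(i)}\colon\mathbb Z_{2n}\to\{-1,1\}$ be $+1$ exactly on $\{y\colon y/2n\in I_i^k\}$, so that $P[h_{2n}^{(i)}(\|X_0\|\mod 2n)=1]=P[X_0\in E_i^{(n)}]$ and $P[h_{2n}^{(i)}(Y_0)=1]\ge \ell_k/2$ for large $n$. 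Lemma~\ref{lemma: comparison} then gives $P[X_0\in E_i^{(n)}]\ge (1-\Phi(2))\,\ell_k>0$ for all large $n$, uniformly over the finitely many intervals at level $k$. Consequently $P[g_n(X_0)\ne g_n'(X_0)\mid X_0\in E_i^{(n)}]\le P[g_n(X_0)\ne g_n'(X_0)]/((1-\Phi(2))\ell_k)\to0$ as $n\to\infty$ for each fixed $k$; combining this with~\eqref{eq: new mean on interval} yields, for all large $n$, that $P[g_n'(X_0)=1\mid X_0\in E_i^{(n)}]\in(c_k,\,1-c_k)$ for every $i$, where $c_k>0$ depends only on $k$. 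This is exactly the conclusion reached on the circle, with a slightly weaker but still $i$-uniform and strictly interior constant.

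With this in hand I would re-run the volatility argument of Claim~\ref{claim: volatility} in the $(g_n)$-form. By the Ornstein--Uhlenbeck convergence of $\bigl((\|X_t^{(n)}\|-2n^2)/(\sqrt2\,n)\bigr)_t$ and the fact that a nondegenerate Ornstein--Uhlenbeck path over $[0,\delta]$ almost surely rises strictly above and drops strictly below its initial value, with probability tending to $1$ — first as $n\to\infty$, then as $k\to\infty$ — the path $\bigl(\tfrac1{2n}(\|X_t\|\mod 2n)\bigr)_{t\in[0,\delta]}$ sweeps through an interval of $C^1$-length exceeding $2\cdot 2^{-(k-1)}$, hence through a full interval $I_{i^\ast}^k$ together with parts of its neighbours. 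Equivalently, $\|X_t\|$ visits \emph{every} level in some band $B$ of $\asymp 2^{-k}n$ consecutive levels near $\|X_0\|$, and on $B$ the function $g_n$ takes each of the two values on a family of level sets of $\pi$-measure bounded below by a constant depending only on $k$. The one place where this genuinely needs more than the circle argument is that $g_n'$ is not a function of $\|x\|$, so ``$\|X_t\|$ sweeps $B$'' does not immediately produce a visited state with $g_n'$-value opposite to $g_n'(X_0)$; I expect this to be the main obstacle.

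To handle it I would proceed as follows. Condition on the (probability $1-o(1)$) event $g_n'(X_0)=g_n(X_0)=:v$, and on the (probability $\ge1-\varepsilon'$ for large, fixed cutoff) event that $\|X_0\|$ is within a bounded number of standard deviations of $2n^2$, so that each level set $L_\ell$ with $\ell\in B$ has $\pi$-measure $\Theta(1/n)$. Since $\{g_n'\ne g_n\}$ has $\pi$-measure $o(1)$ while $\bigcup_{\ell\in B}L_\ell$ has $\pi$-measure $\Theta_k(1)$, on all but an $o(1)$-fraction of the $\Theta_k(n)$ levels $\ell\in B$ with $g_n(L_\ell)=-v$ the function $g_n'$ agrees with $g_n$ on a $(1-o(1))$-fraction of $L_\ell$, hence takes the value $-v$ there; call these the good levels, of which there are $\Theta_k(n)$. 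The walk visits all of them during $[0,\delta]$, and moreover it spends a $\Theta_k(1)$ amount of time with $\|X_t\|\in B$, during which $\Theta(n^2)$ coordinates are resampled, so the configurations it realises inside these level sets decorrelate between far-apart visits. The step that then needs to be made precise — and which is where the real work lies, in contrast to the purely one-dimensional bookkeeping that sufficed in Proposition~\ref{proposition: analogue on circle} — is that the states the walk realises on the $\Theta_k(n)$ good level sets are spread out enough (e.g.\ via a within-level-set mixing or coupling estimate for the cube walk conditioned on its level process) that the probability of never meeting $\{g_n'=-v\}$ tends to $0$. Granting this, $P[\tau_{\partial g_n'}\le\delta]\to1$ for every $\delta>0$, so $(g_n')$ is volatile, contradicting the assumption; this proves the claim and, together with the previous four claims, Theorem~\ref{theorem: noise stable and volatile}.
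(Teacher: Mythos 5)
Your overall strategy is the same as the paper's: transfer the interval densities to the cube via Lemma~\ref{lemma: comparison}, use the Ornstein--Uhlenbeck convergence to show the level process sweeps a full interval $I_i^k$ before time $\delta$, and deduce that the swept band contains level sets on which $g_n'$ is forced (by $o(1)$-closeness to the level-set-constant function $g_n$) to be nearly constant with both signs. Up to that point your argument is sound and matches \eqref{eq: o(1) fcn}, \eqref{eq: new mean on interval} and \eqref{eq: last eq 2} in the paper.

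However, there is a genuine gap at exactly the step you flag as ``where the real work lies'': converting ``$\|X_t\|$ visits a good level set $L_\ell$ on which $g_n'$ equals $-v$ on a $(1-o(1))$-fraction'' into ``the walk actually visits a state with $g_n'=-v$''. You propose to close this with a within-level-set mixing or decorrelation estimate for the cube walk conditioned on its level process, but you do not supply it, and you write ``granting this''. As written the proof is therefore incomplete. The missing ingredient is in fact much simpler than a mixing estimate: the dynamics and the stationary measure are invariant under permutations of the coordinates, while the level path $(\|X_s\|)_s$ is permutation-invariant, so conditionally on the entire level path the law of $X_t$ is exchangeable; in particular, at the hitting time of any level set $L_\ell$ (a stopping time of the level-path filtration) the configuration is \emph{uniform} on $L_\ell$. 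Hence hitting a single level set with $P[g_n'=1\mid L_\ell]>1-\varepsilon$ and a single one with $P[g_n'=1\mid L_{\ell'}]<\varepsilon$ already yields $P[\tau_{\partial g_n'}<\delta\mid\mathcal E_{\ell,\ell',\delta}]>1-2\varepsilon$, which is precisely \eqref{eq: levelset condition}--\eqref{eq: conclusion} in the paper; no estimate over $\Theta_k(n)$ levels and no decorrelation between visits is needed. Insert this symmetry observation in place of your ``spread out enough'' step and the rest of your argument goes through.
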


\begin{proof}
Let \( (g_n) \) and \( (g_n') \) be \( o(1) \)-close. For \( \ell \in [4n^2] \), define  \(L_\ell \coloneqq \{ x \in \{ -1,1\}^{4n^2} \colon \| x \| = \ell \} \). 
Furthermore, for \( \ell, \ell' \in [4n^2] \),   let \( \mathcal{E}_{\ell,\ell',\delta}\) be the event that \( \|  X_t  \| \) hits the level sets \( L_\ell \) and \( L_{\ell'} \) before time \( \delta \). 
Note that if for some \( \ell, \ell' \in [4n^2] \) and some \( \varepsilon > 0 \) we have that 
\begin{equation}\label{eq: levelset condition}
\begin{cases}
P\left[g_n'(X_0) = 1 \mid X_0 \in L_\ell\right] > 1 - \varepsilon \cr
P\left[g_n'(X_0) = 1 \mid X_0 \in L_{\ell' }\right] < \varepsilon
\end{cases}
\end{equation}
then
\begin{equation*}\label{eq: conclusion}
P\left[\tau_{\partial g_n'} < \delta \mid \mathcal{E}_{\ell, \ell', \delta}\right] > 1 - 2\varepsilon.
\end{equation*}

As \( (g_n) \) and \( (g_n') \) are \( o(1) \)-close and \( (g_n) \) are constant on level sets, for all \( \varepsilon > 0 \) we have that
\begin{equation}\label{eq: o(1) fcn}
\lim_{n \to \infty} P_{\ell \sim \bin(4n^2, 0.5)} \left[P\left[g'_n(X_0) = 1 \mid \| X_0 \| = \ell \right]\in [0,\varepsilon) \cup (1-\varepsilon,1 ]\right] = 1.
\end{equation}

For any \( k \geq 2 \), let et \( \mathcal{F}_{k, i, \delta} \) be the event that \( \|  X_t \|/2n \mod 1 \) has hit every level set of the interval \( I_i^k \), \( i \in [2 \cdot 3^{k-1}] \) before time \( \delta \).
Using the same argument as in the proof of Claim~\ref{claim: volatility}, we get that
\begin{equation*} 
	P\left[\cup_i\mathcal{F}_{k,i, \delta } \right]
	\geq  P \left[ \left| {\textstyle \frac{1}{2n}}   \| X_0\|  - {\textstyle \frac{1}{2n}}  \| X_\delta\| \right| \mod 1 > 2 \cdot 2^{-(k-1)} \right].
\end{equation*}
and hence
\begin{equation}\label{eq: last eq 2}
\lim_{k \to \infty} \liminf_{n \to \infty}
	P\left[\cup_i\mathcal{F}_{k,i, \delta } \right] = 1.
\end{equation}

Now by~\eqref{eq: o(1) fcn},
\begin{align*}
\lim_{n \to \infty} P \left[\exists \ell, \ell' \colon  \textnormal{\eqref{eq: levelset condition} holds and } \mathcal{E}_{\ell, \ell', \delta} \textnormal{ occur } \mid   \cup_i\mathcal{F}_{k,i, \delta }  \right] = 1
\end{align*}
and hence 
\begin{align*}
\liminf_{n \to \infty} P \left[ \tau_{\partial g_n'} < \delta \mid   \cup_i\mathcal{F}_{k,i, \delta }  \right] > 1 - 2\varepsilon.
\end{align*}
As \( k \) and \( \varepsilon \) were arbitrary, \eqref{eq: last eq 2} implies that 
\begin{align*}
\lim_{n \to \infty} P \left[ \tau_{\partial g_n'} < \delta   \right] = 1,
\end{align*}
which is the desired conclusion.

\end{proof}
The claims now together imply the conclusion of the theorem.
\end{proof}

\section*{Acknowledgements}
The author is grateful to Anders Martinsson for suggesting the sequence of functions used in the proof of Proposition~\ref{proposition: block fcn result} as an example of a noise stable, monotone and volatile sequence of Boolean functions.
The author would also like to thank her supervisor Jeffrey Steif for many  inspiring conversations and for him carefully reading through the paper, which greatly improved the quality of this manuscript.
Finally, the  author would like to thank the anonymous reviewer for many helpful comments.

\section*{References}

\end{document}